\PassOptionsToPackage{hidelinks}{hyperref}

\documentclass[11pt,reqno]{amsart}

\usepackage{etoolbox}
\usepackage{amssymb}
\usepackage{colonequals}
\usepackage[margin=1in]{geometry}
\usepackage{syntonly}  
\usepackage{mleftright}
\usepackage[colorlinks]{hyperref}
\def \llaa {\langle}
\def \rraa {\rangle}
\def \lllaaa {\left\langle}
\def \rrraaa {\right\rangle}
\DeclareMathOperator{\spann}{span}
\DeclareMathOperator{\Spann}{span}
\DeclareMathOperator{\Ad}{Ad}
\usepackage{hyperref}
\usepackage{orcidlink}
\allowdisplaybreaks

\newtheorem{theorem}{Theorem}[section]

\newtheorem{prop}[theorem]{Proposition}

\theoremstyle{definition}
\newtheorem{definition}[theorem]{Definition}

\theoremstyle{remark}
\newtheorem{remark}[theorem]{Remark}

\numberwithin{equation}{section}

\begin{document}
\newcommand{\spacing}[1]{\renewcommand{\baselinestretch}{#1}\large\normalsize}
\spacing{1.14}

\title{On Naturally Reductive  $\boldsymbol{(\alpha_1,\alpha_2)}$-Metrics}

\author {Ali Hatami Shahi\orcidlink{0009-0009-7177-7654}}

\address{Department of Pure Mathematics\\Faculty of Mathematics and Statistics\\University of \mbox{Isfahan}, Isfahan, Iran.} \email{a.hatami@sci.ui.ac.ir \hspace*{20mm}  ali.hatami.math.ah@gmail.com}

\author {Hamid Reza Salimi Moghaddam\orcidlink{0000-0001-6112-4259}}

\address{Department of Pure Mathematics\\Faculty of Mathematics and Statistics\\University of \mbox{Isfahan}, Isfahan, Iran.} \email{hr.salimi@sci.ui.ac.ir \hspace*{20mm} salimi.moghaddam@gmail.com}


\keywords{$(\alpha_1,\alpha_2)$-Metric, Randers metric, Naturally reductive, Tangent Lie group, $S$-curvature. \\
AMS 2020 Mathematics Subject Classification: 53C30, 53C60, 53B21.
}

\date{\today}

\begin{abstract}
In this paper, we investigate the converse of the Tan-Xu theorem, which states that the naturally reductive property of a Riemannian metric is inherited by a naturally reductive
$(\alpha_1,\alpha_2)$-metric, and we show that, under certain conditions, the converse also holds. We also examine the relationship between geodesic vector fields on homogeneous Riemannian spaces and homogeneous
$(\alpha_1,\alpha_2)$-spaces. Finally, we construct left-invariant $(\alpha_1,\alpha_2)$-metrics on the tangent bundle of Lie groups using left-invariant Randers metrics on the base Lie group, and study their geometric relations.
\end{abstract}

\maketitle
\section{\bfseries Introduction}\label{s1}
Z. Shen and S. Chern were the first to identify, while investigating product Finsler metrics \cite{6}, that there is no natural method for defining a product Finsler metric on the product manifold formed by a pair of Finsler manifolds. However, when the Finsler metrics are Riemannian, it becomes possible to construct various product Finsler metrics. In their book, the authors introduced a norm known as the $f$-product, which essentially marks the emergence of Finsler metrics referred to as $(\alpha_1,\alpha_2)$-metrics, although this term itself was not explicitly stated. Notably, the $f$-product is a specific instance of the Berwald-type $(\alpha_1, \alpha_2)$-metric.

In 2016, S. Deng and M. Xu first introduced the concept of Finsler metrics of the \((\alpha_1,\alpha_2)\) type while researching Clifford-Wolf transformations of homogeneous Finsler spaces (see \cite{11} and \cite{12}). They particularly focused on Clifford-Wolf homogeneous Randers spaces and left-invariant \((\alpha,\beta)\)-metrics of Clifford-Wolf type \cite{22,23}. Their investigate these concepts on \(f\)-products and their generalizations. They stated that \((\alpha_1,\alpha_2)\)-metrics, in a sense, can be viewed as a generalization of \((\alpha,\beta)\)-metrics. Additionally, they defined a notion of suitable normalized data for homogeneous \((\alpha_1,\alpha_2)\)-metrics, which they used to explore geometric properties. The authors also characterized the conditional Clifford-Wolf homogeneity of left-invariant \((\alpha_1,\alpha_2)\)-metrics on simply connected compact Lie groups, demonstrating that in special cases, these metrics must be Riemannian.

In 2022, L. Zhang and M. Xu introduced the concept of a standard homogeneous \((\alpha_1, \alpha_2)\)-metric, generalizing normal homogeneity in Finsler geometry while maintaining relatively good computability. They explored the geodesic orbit (g.o.) property of this metric and established a criterion for a g.o. \((\alpha_1, \alpha_2)\)-space \cite{24}.

In the next year, H. An, Z. Yan, and S. Zhang gave a complete description of invariant $(\alpha_1,\alpha_2)$-metrics on spheres with zero $S$-curvature and investigated the geodesic orbit property and $S$-curvature of these invariant metrics on spheres and found that there are many Finsler spheres with zero $S$-curvature that are not geodesic orbit spaces \cite{2}. Furthermore, in 2023, J. Tan and M. Xu explored the concept of Naturally Reductive \((\alpha_1, \alpha_2)\)-metrics from both geometric and algebraic perspectives \cite{21}. From a geometric viewpoint, they noted that every non-Riemannian naturally reductive homogeneous \((\alpha_1, \alpha_2)\)-metric is locally isometric to an \(f\)-product formed by two naturally reductive Riemannian metrics. On the algebraic side, they demonstrated that if a non-Riemannian homogeneous \((\alpha_1, \alpha_2)\)-metric is derived from a homogeneous Riemannian metric \(\alpha\), then the natural reductivity of the \((\alpha_1, \alpha_2)\)-metric is inherited from \(\alpha\). At the end, the authors presented an explicit formula for the flag curvature of a naturally reductive \((\alpha_1, \alpha_2)\)-metric of the form \(F = \alpha\phi\left(\frac{\alpha_2}{\alpha}\right)\).

As previously mentioned, the \((\alpha_1, \alpha_2)\)-metrics can be regarded as a generalization of \((\alpha, \beta)\)-metrics, suggesting that results established for \((\alpha, \beta)\)-metrics could also be extended to \((\alpha_1, \alpha_2)\)-metrics. For instance, L. Huang and X. Mo \cite{14} constructed several homogeneous Einstein metrics that fall within the \((\alpha_1, \alpha_2)\) category.

This paper aims to study naturally reductive \((\alpha_1, \alpha_2)\)-metrics on tangent Lie groups. We begin with computing the formula for the Cartan tensor associated with an \((\alpha_1, \alpha_2)\)-metric. Next, we explore the converse of the Tan-Xu theorem \cite{21}, which asserts that the naturally reductive property of a Riemannian metric is inherited by a naturally reductive \((\alpha_1, \alpha_2)\)-metric. We demonstrate that, under certain conditions, this converse also holds. Additionally, we investigate the relationship between geodesic vector fields on homogeneous Riemannian spaces and homogeneous \((\alpha_1, \alpha_2)\)-spaces. At the end, using left-invariant Randers metrics on the base Lie group, we construct left-invariant $(\alpha_1,\alpha_2)$-metrics on the tangent bundle of Lie groups and study some geometric relations between them.

\section{\bfseries Preliminaries}\label{s2}

This section provides basic definitions, notations, and results related to Finsler geometry, particularly concerning $(\alpha_1,\alpha_2)$-metrics and also the lifting of a Riemannian metric on the tangent bundle of a manifold.

Let $(M,F)$ be a Finsler manifold. We recall that the fundamental tensor and the Cartan tensor are symmetric sections of the vector bundles  $({\pi }^*T^*M)^{\otimes 2}$ and $({\pi }^*T^*M)^{\otimes 3}$, respectively, defined as follows:
\[g_y (u,v) \colonequals \frac{1}{2}\frac{{\partial }^2}{\partial s \partial t}{\left[F^2\left(y+su+tv\right)\right]}_{s=t=0}=g_{ij}(y)u^iv^j,\]
\[C_y (u,v,w)\! =\!\frac{1}{2}\frac{d}{dt}{\left[g_{y+tw}\right]}_{t=0}=\frac{1}{4}\frac{{\partial }^3}{\partial r\partial s\partial t}{\left[F^2(y+ru+sv+tw\right]}_{r=s=t=0}\!=\!c_{ijk} (y)u^iv^jw^k,\]
where $y\neq 0$, and $g_{ij}=\frac{1}{2}\frac{{\partial }^2F}{\partial y^i\partial y^j}$ and $c_{ijk}=\frac{1}{2}\frac{\partial g_{ij}}{\partial y^k}$ denote their components in local coordinates.

The spray coefficients $G_i$ of the spray vector field $G=y^i\frac{\partial }{\partial x^i}-2G^i\frac{\partial }{\partial y^i}$ on $TM\setminus \{0\}$, in a standard local coordinate system $(x^i,y^i)$ of $TM$, are defined by $G^i(x,y)=\frac{1}{4} g^{il}[{\left(F^2\right)}_{x^ky^l}y^k-{\left(F^2\right)}_{x^l}]$. If there exists a function $P(x ,y)$ which is positively $y$-homogeneous of degree one, such that $G^i=\frac{1}{2}\Gamma^i_{jk}(x)y^jy^k+P(x,y)y^i$ then the Finsler metric $F$ is called of Douglas type. In particular, if $P(x, y)=0$, then the Finsler metric $F$ is called a Berwald metric.

Randers metrics are a special class of Finsler metrics defined by the following expression:
\[F_x(y) = F(x,y) = \alpha(x,y) + \beta(x,y) = \alpha(x,y) + g(X_x, y),\]
where \(\alpha(x,y) = \sqrt{g_x(y,y)}\) represents a Riemannian norm, \(\beta\) is a one-form, \(X\) is the vector field corresponding to the one-form \(\beta\), and \(\|\beta\|_\alpha = \|X\|_\alpha < 1\) (see \cite{5}).

It can be shown that for a Randers metric \(F\), the condition that \(F\) is of Berwald type is equivalent to the condition that \(X\) is a parallel vector field with respect to the Riemannian connection \(g\) (see \cite{6}).

We will now introduce the preliminaries for $(\alpha_1, \alpha_2)$-metrics, the main metrics explored in this article.

\begin{definition}\cite{11}
\label{t2.25}
A Finsler metric $F$ on a manifold $M$ is called an $(\alpha _1,\alpha _2)$-metric if there exists a Riemannian metric $\alpha $ on $M$ and an $\alpha$-orthogonal bundle decomposition $TM={\mathcal{V}}_1 \bigoplus {\mathcal{V}}_2$ with respect to (possibly non-integrable) distributions ${\mathcal{V}}_i$ ($i=1,2$) such that for any$ x\in M$ and $y=y_1+y_2\in T_xM$, where $y_i\in {\mathcal{V}}_i$, F can be represented as:
\[F_x (y)=f(\alpha _1(x,y_1) , \alpha _2(x,y_2)),\]
in which each $\alpha _i$ is defined on a distribution ${\mathcal{V}}_i$ and $\alpha _i(y)=\alpha (y_i)={\left.\alpha \right|}_{{\mathcal{V}}_i}$ and also $f:{\mathbb{R}}^2\to \mathbb{R}$ is a positive smooth function. The pair $(n_1,n_2)$ such that $dimTM=n$, $dimV_i=n_i$ ($i=1, 2$) and $n=n_1+n_2$ is called the dimension decomposition for the $(\alpha _1,\alpha _2)$-metric.
\end{definition}
\begin{remark}
An $(\alpha _1,\alpha _2)$-metric $F$ can be written as $F= \alpha \phi (\frac{\alpha _2}{\alpha} )$ (or $F=\alpha \psi (\frac{\alpha _1}\alpha )$) where $\phi (s)$ $(\psi (s))$ is a positive smooth function on $(0,1)$ and $\phi (s)=\psi \left(\sqrt{1-s^{2}}\right)$. Sometimes it is necessary to write an $(\alpha _1,\alpha _2)$-metric as $F=\sqrt{L(\alpha^2_1,\alpha^2_2)}$ where $L\colon {\mathbb{R}}^+\cup \{0\}\times {\mathbb{R}}^+\cup \{0\}\to {\mathbb{R}}^+\cup \{0\}$ is a positive smooth real function homogeneous of degree 1. With a little calculation, we have $L(a,b)=\left(a+b\right)\phi^2\left(\sqrt{\frac{b}{a+b}}\right)=\left(a+b\right)\psi^2\left(\sqrt{\frac{a}{a+b}}\right)$ where $a,b\in {\mathbb{R}}^+\times {\mathbb{R}}^+$.
\end{remark}
\begin{remark}
\label{tr.2}
(i)
For the cases $n_2=0$ and $n_2=1$, the ($\alpha_1, \alpha_2$) metric $F$ is either an Euclidean metric or an $(\alpha , \beta)$-metric, respectively.
\\
(ii)An $(\alpha _1,\alpha _2)$-metric $F$ is Riemannian if and only if for positive constants $k, l>0$, $F$ has the form $f(u_1,u_2)=\sqrt{ku^2_1+lu^2_2}$.
\end{remark}

\begin{theorem}\cite{11}
\label{t2.26}
Assume $\phi $ and $\psi $ are two positive functions on the interval $[0,1]$ such that $\phi (s) = \psi \left(\sqrt{1-s^2}\right)$. Also, let $F=\alpha \phi (\frac{\alpha _2}{\alpha} )=\alpha \psi (\frac{\alpha _1}\alpha )$ be a $(\alpha _1,\alpha _2)$-norm on $\mathbb{R}^n$ with dimension decomposition $(n_1, n_2)$ such that $n_1\ge  n_2\ge 1$. Then $\phi $ and $\psi $ are both positive smooth functions on $[0,1]$, and for any $s$ and $b$ with the condition $0\le  s\le b\le 1$, we have
\begin{align*}
& \phi(s)-s\phi'(s)+(b^2-s^2)\phi''(s)>0, \\
& \psi(s)-s\psi'(s)+(b^2-s^2)\psi''(s)>0.
\end{align*}
Conversely, if $\phi (s)-s\phi '(s)+(1- s^2)\phi ''(s)>0$, then $F(\alpha )=\alpha \phi (\frac{\alpha _1}\alpha )$ defines a $(\alpha _1,\alpha _2)$-norm with dimension decomposition $(n_1, n_2)$ where $n_1\ge n_2\ge 1$.
\end{theorem}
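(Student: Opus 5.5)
The plan is to reduce everything to a two-dimensional question. Then I will use the standard criterion for Minkowski norms of $(\alpha,\beta)$-type, after choosing adapted coordinates.

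\textbf{Setup.} Choose an $\alpha$-orthonormal basis adapted to $\mathbb{R}^n=\mathcal{V}_1\oplus\mathcal{V}_2$ and write $y=(y_1,y_2)$. Since $n_1\ge 1$ and $n_2\ge 1$, the function $F$ restricted to any plane spanned by a unit vector $e_1\in\mathcal{V}_1$ and a unit vector $e_2\in\mathcal{V}_2$ is $F(ue_1+ve_2)=\sqrt{u^2+v^2}\,\phi\bigl(|v|/\sqrt{u^2+v^2}\bigr)$.

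\textbf{Smoothness of $\phi$ and $\psi$ on $[0,1]$.} On the open set $\{y_1\neq0,\ y_2\neq 0\}$ this gives smoothness on $(0,1)$. At the endpoints I use that $F$ is smooth at points with $y_2=0$, $y_1\neq0$. Restrict to the curve $y=(\sqrt{1-s^2}\,e_1,\ s e_2)$ for $s\in(-1,1)$. Then $F=\phi(|s|)$ is smooth in $s$ near $0$, so $\phi$ extends smoothly to $s=0$, with $\phi$ even in $s$. The symmetric argument, using the curve $(s e_1,\ \sqrt{1-s^2}\,e_2)$, gives $\psi$ smooth at $0$. The relation $\phi(s)=\psi(\sqrt{1-s^2})$ then transfers smoothness of $\psi$ near $0$ to $\phi$ near $1$, and conversely. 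Positivity of both functions is immediate from $F>0$ on unit vectors.

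\textbf{Necessity of the inequalities.} The key observation is that for fixed $b\in(0,1]$ we may pick a unit vector $w=\sqrt{1-b^2}\,e_1'+b\,e_2$. If $n_1\ge2$, take $e_1'\perp e_1$ in $\mathcal{V}_1$; in general a three-dimensional subspace $W=\spann\{e_1,e_1',e_2\}$ suffices. Restricted to a suitable subspace, $F$ becomes an $(\alpha,\beta)$-type expression. More directly, I will compute the Hessian $g_y$ of $F^2=\alpha^2\phi^2(s)$ with $s=\alpha_2/\alpha$, using $\partial s/\partial y$ written in terms of $y_2/\alpha_2$ and $y/\alpha$. The fundamental tensor then takes the form
\[
g_y=\rho\,\alpha\text{-metric}+\text{(rank-two correction in }y,\ y_2\text{)}+\rho_1\,\alpha\text{-projection onto }\mathcal{V}_2,
\]
which is analogous to the $(\alpha,\beta)$ formula of Chern--Shen with $\beta$ replaced by $\alpha_2$. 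Its determinant is computed as in the $(\alpha,\beta)$ case, via the matrix determinant lemma applied block-wise on $\mathcal{V}_1$ and $\mathcal{V}_2$. The outcome is a factor $\phi-s\phi'+(t^2-s^2)\phi''$ in which the parameter $t\in[s,1]$ arises as the $\alpha$-length of the $\mathcal{V}_2$-component of a test direction. Positive definiteness of $g_y$ for every $y$ then forces this quantity to be positive for all $0\le s\le b\le1$. Equivalently, I will test $g_y(w,w)>0$ on vectors $w$ chosen in the span of $y$, $y_2$ and an extra direction in $\mathcal{V}_2$ orthogonal to $y_2$. When $n_2=1$ no such direction exists, so there I would use a direction in $\mathcal{V}_1$ instead. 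This is exactly why the statement is symmetric and yields the $\psi$ inequality too: swapping the roles of $\mathcal{V}_1$ and $\mathcal{V}_2$ and of $\phi$ and $\psi$ gives the second inequality.

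\textbf{Converse.} Assume $\phi-s\phi'+(1-s^2)\phi''>0$. First note that $\phi-s\phi'>0$, obtained by evaluating at $s=1$ and using convexity of the expression in $b^2$. The family $\phi-s\phi'+(b^2-s^2)\phi''$ is affine in $b^2$, so its positivity at $b=s$ and at $b=1$ gives positivity for all intermediate $b$. With that, the determinant formula above is positive and $g_y$ is positive definite along a path to a Riemannian norm, so the norm is a Minkowski norm. I expect the main obstacle to be the explicit determinant and eigenvalue computation of $g_y$, because the extra $\mathcal{V}_2$-projection term makes it a rank-larger perturbation than in the $(\alpha,\beta)$ case. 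I would handle it by decomposing $\mathbb{R}^n$ into $\spann\{y_1,y_2\}$ plus its orthogonal complements inside $\mathcal{V}_1$ and inside $\mathcal{V}_2$, on which $g_y$ acts diagonally with explicit eigenvalues.
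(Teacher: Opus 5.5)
The paper only quotes this theorem from \cite{11} and gives no proof of its own, so I am judging your argument on its own terms. Your necessity argument has a genuine gap: a pointwise computation of $g_y$ cannot produce the parameter $b$. Where $y_1\neq0\neq y_2$, $g_y$ is block diagonal for $\spann\{y_1,y_2\}\oplus(y_1^{\perp}\cap\mathcal{V}_1)\oplus(y_2^{\perp}\cap\mathcal{V}_2)$. Put $\tau:=\sqrt{1-s^2}=|y_1|/|y|$. Then $g_y$ is $\phi(\phi-s\phi')\langle\cdot,\cdot\rangle$ on the second summand and $\psi(\tau)\bigl(\psi(\tau)-\tau\psi'(\tau)\bigr)\langle\cdot,\cdot\rangle$ on the third. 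On the plane it is the fundamental tensor of a planar $(\alpha,\beta)$-norm with $\|\beta\|_\alpha=1$, with determinant $\phi^3\bigl(\phi-s\phi'+(1-s^2)\phi''\bigr)$. Hence $\det g_y$ is a function of $s$ alone, and only $b=1$ ever occurs; no choice of ``test direction'' turns it into a free parameter $t\in[s,1]$.

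To pass from $b=1$ to all $b\in[s,1]$ by affinity in $b^2$, you also need $\phi-s\phi'>0$. This is an eigenvalue of $g_y$ only if $n_1\ge2$. That is also exactly when your tilted plane $\spann\{e_1,\sqrt{1-b^2}\,e_1'+b\,e_2\}$ exists. On that plane $F$ is an $(\alpha,|\beta|)$-norm with $\|\beta\|_\alpha=b$, and the planar Chern--Shen determinant gives the inequality directly; that is the right way to use your key observation, which you abandoned. Symmetrically, the $\psi$-family needs $\psi-\tau\psi'>0$, which is an eigenvalue only if $n_2\ge2$. The theorem allows $n_2=1$, where the $\psi$-family is out of reach of all your devices, and even $n_1=n_2=1$, where both families are. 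Your fallback of taking a direction in $\mathcal{V}_1$ when $n_2=1$ only ever yields the $\phi$-family.

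What is missing is the one-variable fact that the $b=1$ inequality alone forces $\phi-s\phi'>0$ on $[0,1]$. Your justification in the converse (evaluate at $s=1$ and use affinity in $b^2$) is not valid. An affine function of $b^2$ known to be positive only at $b^2=1$ says nothing at $b^2=s^2$ unless you control the sign of $\phi''(s)$. A correct argument is
\[
\frac{d}{ds}\Bigl[\sqrt{1-s^2}\,\bigl(\phi(s)-s\phi'(s)\bigr)\Bigr]=-\frac{s}{\sqrt{1-s^2}}\Bigl(\phi(s)-s\phi'(s)+(1-s^2)\phi''(s)\Bigr)<0\qquad(0<s<1).
\]
Since $\sqrt{1-s^2}\,(\phi-s\phi')\to0$ as $s\to1$, this function is positive on $[0,1)$. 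At $s=1$, positivity of $\phi(1)-\phi'(1)$ is the hypothesis itself. Because $\phi(s)-s\phi'(s)+(1-s^2)\phi''(s)=\psi(\tau)-\tau\psi'(\tau)+(1-\tau^2)\psi''(\tau)$ for $\tau=\sqrt{1-s^2}$, the same argument gives $\psi-\tau\psi'>0$.

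With this lemma everything goes through in all dimensions. For necessity, restricting $F$ to a plane $\spann\{e_1,e_2\}$ gives the $b=1$ inequality for all $s\in[0,1]$, and the lemma plus affinity in $b^2$ then give both families. For the converse, the hypothesis handles the planar block and the lemma handles the other two; you never mention that the $\mathcal{V}_2$-block requires $\psi-\tau\psi'>0$.

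There are two smaller points in the converse. First, the inequality does not by itself make $F$ smooth where $y_1=0$ or $y_2=0$. For example, $\phi(s)=1+\varepsilon s$ satisfies it, yet $F=\alpha+\varepsilon\alpha_2$ is not smooth; here $\psi(\tau)=1+\varepsilon\sqrt{1-\tau^2}$ fails to be smooth at $\tau=1$. So you must keep smoothness of both $\phi$ and $\psi$ on $[0,1]$ as a standing hypothesis; this is equivalent to the evenness you derived in your first part. Second, the block decomposition degenerates at those points, so positive definiteness there needs a separate limiting argument.
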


There are two definitions for a naturally reductive Finsler space as follows.
\begin{definition}\cite{17}
\label{t2.17}
A homogeneous manifold $M=G/H$ with an invariant Finsler metric $F$ is called naturally reductive if there exists an $\Ad (H)$-invariant decomposition $\mathfrak{g}=\mathfrak{m}+\mathfrak{h}$ such that
\[g_y\left(u,{\left[z,x\right]}_{\mathfrak{m}}\right)+g_y\left({[z,u]}_{\mathfrak{m}},x\right)+2C_y\left( [z,y] ,u,x\right)=0,\]
where $y\neq 0$, $u$, $u,x,z\in \mathfrak{m}$, and $C_y$ is the Cartan tensor at $y$.
\end{definition}
\begin{definition}\cite{9,10}
\label{t2.18}
A homogeneous manifold $M$ equipped with an invariant Finsler metric $F$ is called naturally reductive if there exists an invariant Riemannian metric $g$ on $M$ such that the space $(M=G/H,g)$ is naturally reductive and the Riemannian connection and the Chern connection coincide.
\end{definition}
 Zhang, Yan, and Deng proved in \cite{25} that these two definitions are equivalent. Also, according to \cite{10} and \cite{13}, another equivalent definition of a naturally reductive homogeneous Finsler space is
 presented in terms of the corresponding spray vector field \cite{21}.

Xu and Tan derived in their paper \cite{21}  the fundamental tensor and flag curvature for a naturally reductive metric of the form $F=\alpha \phi \left(\frac{\alpha _2}{\alpha} \right)= |y| \phi \left(\frac{|y_2|}{|y|}\right)$
($F=\alpha \psi \left(\frac{\alpha _1}{\alpha} \right)= |y| \psi \left(\frac{|y_1|}{|y|}\right)$),
where $ |\cdot| =\llaa\cdot,\cdot \rraa^{\frac{1}{2}} $ is the $\Ad (H)$-invariant Euclidean norm on $\mathfrak{m}$ and $y=y_1+y_2\in \mathfrak{m}$ with $y_i\in {\mathfrak{m}}_i$, for an $\Ad (H)$-invariant $\left\langle  \cdot,\cdot \right\rangle $-orthogonal decomposition $\mathfrak{m}={\mathfrak{m}}_1+{\mathfrak{m}}_2$, on the manifold $M=G/H$ with respect to the reductive decomposition $\mathfrak{g}=\mathfrak{m}+\mathfrak{h}$.
\begin{align*}
g_y (u,v)  & =\left.\frac{1}{2}\frac{{\partial }^2}{\partial s\partial t}F^2\left(y+su+tv\right)\right|_{s=t=0}
\\
&  = \langle u,v\rangle \phi^2(b)
\\
& + \phi (b) {\phi}'(b)\left(\frac{\llaa y,v\rraa  \llaa  y_2,u_2\rraa  }{ |y|   |y_2| }+\frac{\llaa  y_2,v_2 \rraa  \llaa  y,u\rraa  }{ |y|   |y_2| }  -\frac{\llaa  y,u\rraa  \llaa y,v\rraa   |y_2| }{{|y|}^3} +\frac{\llaa  u_2,v_2 \rraa   |y| }{|y_2|} \right.
\\
&\quad  \left. -\frac{\llaa  u,v\rraa    |y_2| }{|y|}-\frac{\llaa  u_2,y_2\rraa   \llaa  v_2,y_2 \rraa    |y| }{{|y_2|}^3}\right)
\\
& +\left(\phi^{'^2}(b)+\phi (b)\phi''(b)\right)\left(\frac{\llaa  u_2,y_2\rraa  }{ |y|   |y_2| }-\frac{\llaa u,y \rraa    |y_2| }{{|y|}^3}\right)\left(\frac{ \llaa  v_2,y_2 \rraa    |y| }{|y_2|}-\frac{ \llaa  v,y \rraa   |y_2|}{|y|}\right)
\\
&                = \llaa  u,v\rraa   \psi^2 (b_1)
\\
& + \psi  (b_1)  \psi' (b_1) \left(\frac{\llaa y,v\rraa  \llaa  y_1,u_1\rraa   }{ |y|   |y_1| }+\frac{\llaa  y_1,v_1\rraa   \llaa  y,u\rraa  }{ |y|   |y_1| } -\frac{\llaa  y,u\rraa  \llaa y,v\rraa   |y_1| }{{|y|}^3}  +\frac{\llaa  u_1,v_1 \rraa    |y| }{|y_1|}
\right.
\\
& \quad  \left.-\frac{\llaa  u,v\rraa    |y_1| }{|y|}-\frac{\llaa  u_1,y_1\rraa  \llaa  v_1,y_1 \rraa    |y| }{{|y_1|}^3}\right)
\\
& +\left(\psi^{'^2} (b_1) +\psi (b_1)\psi''(b_1)\right)\left(\frac{\llaa  u_1,y_1\rraa  }{ |y|   |y_1| }-\frac{\llaa u,y \rraa    |y_1| }{{|y|}^3}\right)\left(\frac{\llaa  v_1,y_1 \rraa    |y| }{|y_1|}-\frac{ \llaa  v,y \rraa   |y_1|}{|y|}\right),
\end{align*}
where $b=\frac{|y_2|}{|y|}$ and $b_1=\frac{|y_1|}{|y|}$.

Also, with similar calculations, the formula for the fundamental tensor of a $\left(\alpha _1,\alpha _2\right)$-metric of the form $F=\sqrt{L\left(\alpha^2_1,\alpha^2_2\right)}$ is obtained as follows:
\begin{align*}
g_y (u,v)
& =\frac{1}{2}\frac{{\partial }^2}{\partial s\partial t}{\left.F^2\left(y+su+tv\right)\right|}_{s=t=0}
\\
& =\frac{1}{2}\frac{{\partial }^2}{\partial s\partial t} \left.L\left(\alpha^2_1\left(y_1+su_1+tv_1\right),\alpha^2_2\left(y_2+su_2+tv_2\right)\right)\right|_{s=t=0}
\\
& =\Big [L_{11}\left({|y_1|}^2,{|y_2|}^2\right)\llaa  u_1,y_1\rraa  \llaa  v_1,y_1 \rraa   +L_{21}\left({|y_1|}^2,{|y_2|}^2\right)\llaa  u_2,y_2\rraa  \llaa  v_1,y_1 \rraa
\\
&+ L_{12}\left({|y_1|}^2,{|y_2|}^2\right)\llaa  u_1,y_1\rraa   \llaa  v_2,y_2 \rraa   +L_{22}\left({|y_1|}^2,{|y_2|}^2\right)\llaa  u_2,y_2\rraa   \llaa  v_2,y_2 \rraa   \Big]
\\
&                     + \left[L_1\left({|y_1|}^2,{|y_2|}^2\right)\llaa  u_1,v_1 \rraa   +L_2\left({|y_1|}^2,{|y_2|}^2\right)\llaa  u_2,v_2 \rraa  \right],
\end{align*}
where $L_1(\cdot,\cdot)$, $L_2(\cdot,\cdot)$, $L_{11}(\cdot,\cdot)$, etc., are partial derivatives with respect to the variables indicated by the lower indices and also $u_i$ and $v_i$ are the components of $u$ and $v$ in ${\mathfrak{m}}_i$, respectively.

Since we want to work on the tangent Lie group $TG$, here we mention some preliminaries about such spaces.
\begin{theorem}\cite{3}
\label{t2.13}
Let $X$ and $Y$ be two left-invariant vector fields on a Lie group $G$, and $X^c$ and $Y^c$, and $X^v$ and $Y^v$ denote their corresponding complete and vertical lifts on the tangent bundle $TG$, respectively. Then for their Lie bracket we have:
\begin{align*}
& \left[X^c,Y^c\right]={\left[X,Y\right]}^c,  \\
& \left[X^c,Y^v\right]={\left[X,Y\right]}^v,  \\
& \left[X^v,Y^v\right]=0.
\end{align*}
\end{theorem}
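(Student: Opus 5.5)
The statement is the classical bracket relations for complete and vertical lifts of left-invariant vector fields on $TG$. The plan is to work in the Lie group structure of $TG$ and compute each lift through its flow. Throughout, $X^c$ is the complete lift (the vector field whose flow is the differential $d\phi_t$ of the flow $\phi_t$ of $X$), and $X^v$ is the vertical lift, with $X^v_{v_g}=\frac{d}{dt}\big|_{t=0}(v_g+tX_g)$.

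\textbf{Vertical--vertical bracket.} The flow of $Y^v$ is fiberwise translation, $\tau_s(v_g)=v_g+sY_g$. These translations act inside each fiber, and for different vector fields they commute, since addition in the vector space $T_gG$ is commutative. Commuting flows give $[X^v,Y^v]=0$. Alternatively, in induced local coordinates $(x^i,y^i)$ we have $X^v=X^i(x)\,\partial/\partial y^i$, whose coefficients do not depend on $y$, so the bracket vanishes directly.

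\textbf{Complete--complete bracket.} The map $X\mapsto X^c$ is the infinitesimal version of $\phi\mapsto d\phi$. In local coordinates
\[X^c=X^i\frac{\partial}{\partial x^i}+y^j\frac{\partial X^i}{\partial x^j}\frac{\partial}{\partial y^i}.\]
A direct computation of $[X^c,Y^c]$ then gives horizontal part $[X,Y]^i$ and vertical part $y^j\,\partial_j[X,Y]^i$, which is exactly $[X,Y]^c$. The second-order terms $y^jX^k\partial_k\partial_jY^i$ cancel by symmetry of second derivatives.

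\textbf{Mixed bracket.} In coordinates,
\[[X^c,Y^v]=\Big(X^k\partial_kY^i-Y^j\partial_jX^i\Big)\frac{\partial}{\partial y^i}=[X,Y]^v.\]
The term $y^j\partial_jX^i\,\partial_{y^i}$ applied to $Y^i$ gives nothing, because $Y^i$ does not depend on $y$. Its derivative in $y$, however, contributes the term $-Y^j\partial_jX^i$.

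These identities hold for arbitrary vector fields, not only left-invariant ones. Left-invariance matters only because it makes the lifts left-invariant on the Lie group $TG\cong G\ltimes\mathfrak g$. That is what lets the relations describe the Lie algebra $\mathfrak g\ltimes\mathfrak g$ of $TG$.

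\textbf{Main obstacle.} The only delicate step is the mixed bracket: one has to track the sign and the $y$-dependence correctly. Everything else is routine coordinate bookkeeping.
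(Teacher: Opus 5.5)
Your proof is correct. The paper gives no proof of its own and simply quotes these relations from \cite{3}; the standard justification of this result is the induced-coordinate computation you carry out, using $X^c=X^i\partial_{x^i}+y^j\partial_jX^i\,\partial_{y^i}$ and $X^v=X^i\partial_{y^i}$, and as you note it holds for arbitrary vector fields. One wording fix: in $[X^c,Y^c]$ the second-order terms $y^jX^k\partial_k\partial_jY^i-y^jY^k\partial_k\partial_jX^i$ do not cancel each other; they are matched, via symmetry of mixed partials, by the corresponding terms of $y^j\partial_j[X,Y]^i$. Also, your opening announces a flow and group-structure argument, but the body is a coordinate computation.
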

\begin{theorem}\cite{3}
\label{t2.14}
If $\{X_1, X_2, \ldots , X_n\}$ is a basis consisting of left-invariant vector fields for the Lie algebra of a Lie group $G$, then $\{X^c_1, X^v_1, X^c_2, X^v_2, \ldots , X^c_n, X^v_n\}$ is a basis consisting of left-invariant vector fields for the Lie algebra of the tangent Lie group $TG$.
\end{theorem}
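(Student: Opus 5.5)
The plan is to work with the group structure on $TG$ given by the differential of the multiplication $\mu\colon G\times G\to G$. Writing an element of $TG$ as $(g,u)$ with $u\in T_gG$, the product is
\[
(g,u)\cdot(h,v)=T\mu(u,v)=\bigl(gh,\; T L_g\, v+T R_h\, u\bigr),
\]
with identity $(e,0)$. Since $\dim TG=2n$, it is enough to prove two things: each $X_i^c$ and $X_i^v$ is left-invariant for this structure, and the $2n$ vectors $X_i^c(e,0)$, $X_i^v(e,0)$ are linearly independent in $T_{(e,0)}TG$. The bracket relations of Theorem \ref{t2.13} then show that the span is closed under the bracket, although this is not needed for the statement itself.

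\emph{Left-invariance via flows.} A vector field is left-invariant if and only if its flow commutes with every left translation $\ell_{(g,u)}$ of $TG$. For a left-invariant $X$ on $G$, the flow is $R_{a_t}$ with $a_t=\exp tX$. The complete lift $X^c$ therefore has flow $TR_{a_t}\colon (h,v)\mapsto (ha_t,\,TR_{a_t}v)$. I then compute both compositions with $\ell_{(g,u)}$:
\[
TR_{a_t}\bigl(\ell_{(g,u)}(h,v)\bigr)=\bigl(gha_t,\;TL_g TR_{a_t}v+TR_{ha_t}u\bigr)=\ell_{(g,u)}\bigl(TR_{a_t}(h,v)\bigr).
\]
This uses only that left and right translations commute and that $TR_{a_t}TR_h=TR_{ha_t}$.

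The vertical lift $X^v$ has flow $(h,v)\mapsto(h,\,v+t\,TL_hX_e)$. Applying $\ell_{(g,u)}$ before or after this flow gives $(gh,\;TL_gv+TR_hu+t\,TL_{gh}X_e)$ in both cases, by linearity of $TL_g$ and $TL_gTL_h=TL_{gh}$. Hence $X^c$ and $X^v$ are both left-invariant.

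\emph{Independence at the identity.} Use the natural splitting $T_{(e,0)}TG\cong T_eG\oplus T_eG$ into horizontal and vertical parts at the zero section. Under this splitting, $X^c(e,0)=(X_e,0)$, because the flow $TR_{a_t}$ moves the base point along $a_t$ and keeps the zero vector. Likewise $X^v(e,0)=(0,X_e)$. Since $\{X_{i,e}\}$ is a basis of $T_eG$, the $2n$ vectors $\{(X_{i,e},0),(0,X_{i,e})\}$ form a basis of $T_{(e,0)}TG$. Left-invariant fields are determined by their values at the identity, so $\{X_i^c,X_i^v\}$ is a basis of the Lie algebra of $TG$.

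The main point needing care is the left-invariance of $X^c$. This requires the correct formula for the product on $TG$, including the term $TR_hu$, and checking that the right-translation flow commutes with it. The vertical case and the dimension count are routine.
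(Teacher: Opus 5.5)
Your proof is correct. The product $(g,u)(h,v)=(gh,\,TL_gv+TR_hu)$ is the right one. Commuting the flows $TR_{a_t}$ and $(h,v)\mapsto(h,\,v+t\,TL_hX_e)$ with an arbitrary $\ell_{(g,u)}$, keeping the $TR_hu$ term, does establish left-invariance. The canonical splitting of $T_{(e,0)}TG$ along the zero section then gives independence, and the dimension count finishes the argument.

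The paper gives no proof of this statement; it is quoted from \cite{3} as background, so there is no internal argument to match. A more common route is to trivialize $TG$ by translations, e.g.\ $(g,u)\mapsto(g,\,TR_{g^{-1}}u)$. This turns the product into the semidirect-product law $(g,A)(h,B)=(gh,\,A+\Ad(g)B)$ on $G\ltimes_{\Ad}\mathfrak g$. The Lie algebra of $TG$ then becomes $\mathfrak g\ltimes_{\mathrm{ad}}\mathfrak g$, with $X^c\leftrightarrow(X,0)$ and $X^v\leftrightarrow(0,X)$.

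Your flow argument is intrinsic and uses nothing beyond the formula for $T\mu$ and the characterization of $X^c$ as the generator of $T\Phi_t$. However, it yields only the basis statement. The trivialization also gives the bracket relations of Theorem~\ref{t2.13} at once, as the semidirect-product bracket $[(X,A),(Y,B)]=([X,Y],\,[X,B]-[Y,A])$. In your approach those relations must instead be imported from the general identities $[X^c,Y^c]=[X,Y]^c$, $[X^c,Y^v]=[X,Y]^v$, $[X^v,Y^v]=0$ for lifts of arbitrary vector fields.
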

\begin{definition}\cite{3}
\label{t2.15}
Let $g$ be a left-invariant Riemannian metric on a Lie group $G$. A natural Riemannian metric $\tilde{g}$ on the tangent bundle $TG$ is defined as follows, and is clearly left-invariant:
\begin{align*}
& \widetilde{g}\left(X^c, Y^c\right)=g\left(X,Y\right),\\
& \widetilde{g}\left(X^v, Y^v\right)=g\left(X,Y\right),\\
& \widetilde{g}\left(X^c, Y^v\right)=0.
\end{align*}
\end{definition}

\section{\bfseries On the Natural Reductivity of $\boldsymbol{(\alpha_1,\alpha_2)}$-Metrics}
As we have seen in Definition \ref{t2.17}, besides the fundamental tensor, the Cartan tensor is an important quantity to study the natural reductivity of a homogeneous Finsler space. So, at first, we derive the formula for the Cartan tensor of an $(\alpha _1,\alpha _2)$-metric of the form $F=\alpha \phi \left(\frac{\alpha _2}{\alpha} \right)= |y| \phi \left(\frac{|y_2|}{|y|}\right)$ on a manifold $M=G/H$ with respect to the reductive decomposition $\mathfrak{g}=\mathfrak{m}+\mathfrak{h}$, where$ y\in \mathfrak{m}\setminus \{0\}$ and $u,v,w\in \mathfrak{m}$ such that $y=y_1+y_2$ and $u=u_1+u_2$ and $v=v_1+v_2$ and $w=w_1+w_2$ with $y_i,u_i,v_i,w_i\in {\mathfrak{m}}_i$ ($i=1,2$) and $\mathfrak{m}={\mathfrak{m}}_1+{\mathfrak{m}}_2$ is an $\Ad (H)$-invariant $\llaa\cdot,\cdot\rraa$-orthogonal, and $ |\cdot| =\langle\cdot,\cdot\rangle^{\frac{1}{2}} $ is an $\Ad (H)$-invariant Euclidean norm on $\mathfrak{m}$.
\begin{align*}
C_y (u,v,w)
 & =\left.\frac{1}{2}\frac{d}{dt}\right|_{t=0} g_{y+tw} (u,v) =\frac{1}{4} \left.\frac{{\partial }^3}{\partial r\partial s\partial t}\right|_{r=s=t=0}F^2\left(y+ru+sv+tw\right)\\
& =\frac{1}{4}{\left.\frac{{\partial }^3}{\partial r\partial s\partial t}\right|}_{r=s=t=0}\left\langle y+ru+sv+tw,y+ru+sv+tw\right\rangle
\\
& \times  \phi^2\left(\frac{\lllaaa{y_2+ru_2+sv_2+tw_2,y_2+ru_2+sv_2+tw_2}\rrraaa^{\frac{1}{2}}}{\lllaaa{y+ru+sv+tw,y+ru+sv+tw}\rrraaa^{\frac{1}{2}}}\right).
\end{align*}

To simplify the calculations, we set $K=F^2\left(y+ru+sv+tw\right)$, $A=y+ru+sv+tw$, $B=y_2+ru_2+sv_2+tw_2$, $M=\left\langle y+ru+sv+tw,y+ru+sv+tw\right\rangle$, and \\
$N  =\frac{\lllaaa{y_2+ru_2+sv_2+tw_2,y_2+ru_2+sv_2+tw_2}\rrraaa^{\frac{1}{2}}}{\lllaaa{y+ru+sv+tw,y+ru+sv+tw}\rrraaa^{\frac{1}{2}}}$.
\begin{align*}
& \frac{1}{2}\frac{\partial }{\partial s}\frac{\partial K}{\partial t}
=\frac{1}{2}\frac{\partial }{\partial s}\left(\frac{\partial M}{\partial t} \phi^2 (N) +2M\phi  (N) \phi' (N) \frac{\partial N}{\partial t}\right)
\\
& =\frac{1}{2}\frac{\partial }{\partial s} \left(2\lllaaa\frac{\partial A}{\partial t}, A \rrraaa\phi^2 (N) +2 \llaa  A,A \rraa   \phi  (N) \phi' (N) \frac{\partial N}{\partial t}\right)
\\
& = \frac{\partial }{\partial s} \left(\lllaaa\frac{\partial A}{\partial t},A\rrraaa\right)\phi^2 (N) +2\lllaaa\frac{\partial A}{\partial t},A\rrraaa\phi  (N) \phi' (N) \frac{\partial N}{\partial s}
\\
&  \quad    +\frac{\partial }{\partial s} \left[\phi  (N) \phi' (N) \left(\frac{\llaa w_2,B\rraa  \llaa  A,A \rraa^{\frac{1}{2}}}{ \llaa  B,B \rraa^{\frac{1}{2}}}-\frac{\llaa w,A \rraa  \llaa  B,B \rraa^{\frac{1}{2}}}{ \llaa  A,A \rraa^{\frac{1}{2}}}\right)\right]
\\
& = \llaa v,w \rraa \phi^2 (N) +2 \llaa w,A \rraa \phi  (N) \phi' (N)  (E-F) +\frac{\partial }{\partial s}(\phi  (N) \phi' (N)  (C-D) )
\\
&=\Bigg(\llaa v,w \rraa \phi^2 (N) +2 \llaa w,A \rraa \phi  (N) \phi' (N)  (E-F) +{\phi'}^2 (N) \frac{\partial N}{\partial s} (C-D)
\\
&    \left. \quad  +\phi  (N) \phi'' (N) \frac{\partial N}{\partial s} (C-D) +\phi  (N) \phi'(N)(\frac{\partial C}{\partial s}-\frac{\partial D}{\partial D}\right)\Bigg),
\end{align*}
where $C=\frac{\llaa w_2,B \rraa  \llaa  A,A \rraa^{\frac{1}{2}}}{ \llaa  B,B \rraa^{\frac{1}{2}}}$, $D=\frac{\llaa w,A\rraa  \llaa  B,B \rraa^{\frac{1}{2}}}{ \llaa  A,A \rraa^{\frac{1}{2}}}$, $E=\frac{\llaa v_2,B \rraa }{ \llaa  A,A \rraa^{\frac{1}{2}} \llaa  B,B \rraa^{\frac{1}{2}}}$, and $F=\frac{ \llaa  v,A \rraa    \llaa  B,B \rraa^{\frac{1}{2}}}{ \llaa  A,A \rraa^{\frac{3}{2}}}$.

So we have:
\begin{align*}
\frac{1}{2}\frac{\partial }{\partial r}\left(\frac{{\partial }^2K}{\partial s\partial t}\right)
&=\frac{1}{2}\frac{\partial }{\partial r}\Bigg [ \llaa   v,w \rraa   \phi^2 (N) +2\left\langle w,A\right\rangle \phi  (N) \phi' (N)  (E-F) +\phi^{'^2}  (N)  (C-D)  (E-F)
\\
&\quad +\phi  (N) \phi'' (N)  (C-D)  (E-F) +\phi  (N) \phi' (N) \left(\frac{\partial C}{\partial s}-\frac{\partial D}{\partial s}\right)\Bigg]
\\
&                     = \llaa   v,w \rraa   \phi  (N) \phi' (N) \frac{\partial N}{\partial r}+\left\langle u,w\right\rangle \phi  (N) \phi' (N)  (E-F) +\left\langle w,A\right\rangle {\phi'}^2 (N)  (E-F)
\\
&                    \quad               +\left\langle w,A\right\rangle \phi  (N) \phi'' (N) \frac{\partial N}{\partial r} (E-F) +\left\langle w,A\right\rangle \phi  (N) \phi' (N) \left(\frac{\partial E}{\partial r}-\frac{\partial F}{\partial r}\right)
\\
&                    \quad               +\phi' (N) \phi'' (N) \frac{\partial N}{\partial r} (C-D)  (E-F) +\frac{1}{2}{\phi'}^2 (N) \left(\frac{\partial C}{\partial r}-\frac{\partial D}{\partial r}\right) (E-F) \\
&                                  +\frac{1}{2}{\phi'}^2 (N)  (C-D) \left(\frac{\partial E}{\partial r}-\frac{\partial F}{\partial r}\right)+\frac{1}{2}\phi' (N) \phi'' (N) \frac{\partial N}{\partial r} (C-D)  (E-F)
\\
& \quad                                   +\frac{1}{2}\phi  (N) \phi^{'''} (N) \frac{\partial N}{\partial r} (C-D)  (E-F) +\frac{1}{2}\phi  (N) \phi'' (N) \left(\frac{\partial C}{\partial r}-\frac{\partial D}{\partial r}\right) (E-F)
\\
& \quad                                   +\frac{1}{2}\phi  (N) \phi'' (N)  (C-D) \left(\frac{\partial E}{\partial r}-\frac{\partial F}{\partial r}\right)+\frac{1}{2}{\phi'}^2 (N) \frac{\partial N}{\partial r}\left(\frac{\partial C}{\partial s}-\frac{\partial D}{\partial s}\right)
\\
& \quad                                   +\frac{1}{2}\phi  (N) \phi'' (N) \frac{\partial N}{\partial r}\left(\frac{\partial C}{\partial s}-\frac{\partial D}{\partial s}\right)+\frac{1}{2}\phi  (N) \phi'(N)(\frac{{\partial }^2C}{\partial r\partial s}-\frac{{\partial }^2D}{\partial r\partial s})
\\
&  =\phi  (N) \phi' (N)\Bigg [ \llaa   v,w \rraa   \frac{\partial N}{\partial r}+\left\langle u,w\right\rangle  (E-F) +\left\langle w,A\right\rangle \left(\frac{\partial E}{\partial r}-\frac{\partial F}{\partial r}\right)
\\
& \quad +\frac{1}{2}\left(\frac{{\partial }^2C}{\partial r\partial s}-\frac{{\partial }^2D}{\partial r\partial s}\right)\Bigg]
\\
& \quad     +(\phi^{'^2} (N) +\phi  (N) \phi'' (N) ) \Bigg [\left\langle w,A\right\rangle \frac{\partial N}{\partial r} (E-F) +\frac{1}{2}\left(\frac{\partial C}{\partial r}-\frac{\partial D}{\partial r} \right) (E-F)
\\
& \quad +\frac{1}{2} (C-D) \left(\frac{\partial E}{\partial r}-\frac{\partial F}{\partial r}\right)
 +\frac{1}{2}\frac{\partial N}{\partial r}\left(\frac{\partial C}{\partial s}-\frac{\partial D}{\partial s}\right)\Bigg]
\\
& \quad    +\frac{1}{2}\left(3\phi' (N) \phi'' (N) +\phi  (N) \phi^{'''} (N) \right)\left[\frac{\partial N}{\partial r} (C-D)  (E-F) \right].
\end{align*}

Hence for the Cartan tensor we have:
\begin{align*}
{}&{}C_y (u,v,w)
 =\frac{1}{4} {\left.\frac{{\partial }^3}{\partial r\partial s\partial t}\right|}_{r=s=t=0}F^2(y+ru+sv+tw)                                                                              \\
& =\phi (b)\phi'(b) \Bigg[ \llaa   v,w \rraa   \left(\frac{\left\langle u_2,y_2\right\rangle }{ |y_2|  |y| }-\frac{\left\langle u,y\right\rangle  |y_2| }{{|y|}^3}\right)+\left\langle u,w\right\rangle \left(\frac{ \llaa  v_2,y_2 \rraa    }{ |y_2|  |y| }-\frac{ \llaa   v,y \rraa     |y_2| }{{|y|}^3}\right)
\\
& \quad +\langle w,y\rangle \Bigg(\frac{\left\langle u_2,v_2\right\rangle }{ |y_2|  |y| }-\frac{ \llaa  v_2,y_2 \rraa    \left\langle u,y\right\rangle }{ |y_2| {|y|}^3}-\frac{ \llaa  v_2,y_2 \rraa    \left\langle u_2,y_2\right\rangle }{ |y| {|y_2|}^3}-\frac{\left\langle u,v\right\rangle  |y_2| }{{|y|}^3}-\frac{ \llaa   v,y \rraa    \left\langle u_2,y_2\right\rangle }{ |y_2| {|y|}^3}
\\
& \qquad +\frac{3\left\langle u,y\right\rangle  \llaa   v,y \rraa     |y_2| }{{|y|}^5}\Bigg)
\\
& \quad    +\frac{1}{2}\Bigg (\frac{2\left\langle w_2,v_2\right\rangle \left\langle u,y\right\rangle +\left\langle u,v\right\rangle \left\langle w_2,y_2\right\rangle + \llaa   v,y \rraa    \left\langle u_2,w_2\right\rangle }{ |y_2|  |y| }
\\
& \qquad -\frac{\left(\left\langle u,y\right\rangle {|y_2|}^2+\left\langle u_2,y_2\right\rangle {|y|}^2\right)\left(\left\langle w_2,v_2\right\rangle {|y|}^2+ \llaa   v,y \rraa    \left\langle w_2,y_2\right\rangle \right)}{{|y_2|}^3{|y|}^3}
\\
& \quad                                 -\frac{2\left\langle w,v\right\rangle \left\langle u_2,y_2\right\rangle +\left\langle u_2,v_2\right\rangle \left\langle w,y\right\rangle + \llaa  v_2,y_2 \rraa    \left\langle u,w\right\rangle }{ |y_2|  |y| }
\\
& \qquad +\frac{\left(\left\langle u,y\right\rangle {|y_2|}^2+\left\langle u_2,y_2\right\rangle {|y|}^2\right)\left(\left\langle w,v\right\rangle {|y_2|}^2+ \llaa  v_2,y_2 \rraa    \left\langle w,y\right\rangle \right)}{{|y_2|}^3{|y|}^3}
\\
& \quad     -\frac{\left(\left\langle u_2,v_2\right\rangle \left\langle w_2,y_2\right\rangle + \llaa  v_2,y_2 \rraa    \left\langle u_2,w_2\right\rangle \right) |y| }{{|y_2|}^3}-\frac{ \llaa  v_2,y_2 \rraa    \left\langle w_2, y_2\right\rangle \left\langle u,y\right\rangle }{{ |y|  |y_2| }^3}
\\
& \qquad +\frac{3 |y| \left\langle u_2,y_2\right\rangle  \llaa  v_2,y_2 \rraa    \left\langle w_2,y_2\right\rangle }{{|y_2|}^5}
\\
&  \quad        +\frac{\left(\left\langle u,v\right\rangle \left\langle w,y\right\rangle + \llaa   v,y \rraa    \left\langle u,w\right\rangle \right) |y_2| }{{|y|}^3}+\frac{ \llaa   v,y \rraa    \left\langle w, y\right\rangle \left\langle u_2,y_2\right\rangle }{{ |y_2|  |y| }^3}-\frac{3 |y_2| \left\langle u,y\right\rangle  \llaa   v,y \rraa    \left\langle w,y\right\rangle }{{|y|}^5}\Bigg) \Bigg]
\\
& \quad +({\phi'}^2(b)+\phi (b)\phi''(b))\Bigg[\left\langle w,y\right\rangle \left(\frac{\left\langle u_2,y_2\right\rangle }{ |y_2|  |y| }-\frac{\left\langle u,y\right\rangle  |y_2| }{{|y|}^3}\right)\left(\frac{ \llaa  v_2,y_2 \rraa    }{ |y_2|  |y| }-\frac{ \llaa   v,y \rraa     |y_2| }{{|y|}^3}\right)
\\
&    +\frac{1}{2}\Bigg(\frac{\left\langle u_2,w_2\right\rangle  |y| }{|y_2|}+\frac{\left\langle u,y\right\rangle \left\langle w_2,y_2\right\rangle }{ |y_2|  |y| }-\frac{\left\langle u_2,y_2\right\rangle \left\langle w_2,y_2\right\rangle  |y| }{{|y_2|}^3} -\frac{\left\langle u,w\right\rangle  |y_2| }{|y|}-\frac{\left\langle u_2,y_2\right\rangle \left\langle w,y\right\rangle }{ |y_2|  |y| }
\\
 & \qquad +\frac{\left\langle u,y\right\rangle \left\langle w,y\right\rangle  |y_2| }{{|y|}^3}\Bigg)\Bigg(\frac{ \llaa  v_2,y_2 \rraa    }{ |y_2|  |y| }-\frac{ \llaa   v,y \rraa     |y_2| }{{|y|}^3}\Bigg)
 \\
&    +\frac{1}{2}\left (\frac{\left\langle w_2,y_2\right\rangle  |y| }{|y_2|}-\frac{\left\langle w,y\right\rangle  |y_2| }{|y|}\right)
\Bigg(\frac{\left\langle u_2,v_2\right\rangle }{ |y_2|  |y| }-\frac{ \llaa  v_2,y_2 \rraa    \left\langle u,y\right\rangle }{ |y_2| {|y|}^3}-\frac{ \llaa  v_2,y_2 \rraa    \left\langle u_2,y_2\right\rangle }{ |y| {|y_2|}^3}
\\
&  \qquad  -\frac{\left\langle u,v\right\rangle  |y_2| }{{|y|}^3}-\frac{ \llaa   v,y \rraa    \left\langle u_2,y_2\right\rangle }{ |y_2| {|y|}^3}+\frac{3\left\langle u,y\right\rangle  \llaa   v,y \rraa     |y_2| }{{|y|}^5}\Bigg)
\\
 &   +\frac{1}{2}\left(\frac{\left\langle u_2,y_2\right\rangle }{ |y_2|  |y| }-\frac{\left\langle u,y\right\rangle  |y_2| }{{|y|}^3}\right)
 \Bigg(\frac{\left\langle w_2,v_2\right\rangle  |y| }{|y_2|}+\frac{ \llaa   v,y \rraa    \left\langle w_2,y_2\right\rangle }{ |y_2|  |y| }-\frac{ \llaa  v_2,y_2 \rraa    \left\langle w_2,y_2\right\rangle  |y| }{{|y_2|}^3}
 \\
 & \qquad -\frac{\left\langle w,v\right\rangle  |y_2| }{|y|}
 -\frac{\left\langle v_2, y_2\right\rangle \left\langle w,y\right\rangle }{ |y_2|  |y| }+\frac{ \llaa   v,y \rraa    \left\langle w,y\right\rangle  |y_2| }{{|y|}^3}\Bigg)\Bigg]
\\
& \quad     +\frac{1}{2}\left(3\phi'(b)\phi''(b)+\phi (b)\phi^{'''}(b)\right)
\Bigg[\left(\frac{\left\langle u_2,y_2\right\rangle }{ |y|  |y_2| }-\frac{\left\langle u,y\right\rangle  |y_2| }{{|y|}^3}\right)\left(\frac{\left\langle w_2,y_2\right\rangle  |y| }{|y_2|}
-\frac{\left\langle w,y\right\rangle  |y_2| }{|y|}\right)
\\
& \qquad \quad \left(\frac{ \llaa  v_2,y_2 \rraa     |y| }{|y_2|}
 -\frac{ \llaa   v,y \rraa     |y_2| }{|y|}\right)\Bigg],
 \end{align*}
in which $b=\frac{|y_2|}{|y|}$.

We note that in  \cite{1}, it has been shown under certain conditions that the tangent Lie groups are naturally reductive. Also, in  \cite{21}, it has been proven that if a (non-Riemannian) homogeneous $(\alpha _1,\alpha_2)$-metric $F$ on $M=G/H$ is naturally reductive with respect to the reductive decomposition $\mathfrak{g}=\mathfrak{m}+\mathfrak{h}$, then the Riemannian metric $\alpha =\sqrt{\alpha^2_1+\alpha^2_2}$ is also naturally reductive with respect to the same decomposition. The following theorem states under what conditions the converse of the latter proposition holds.
\begin{theorem}
\label{t4.1}
Suppose $\left(M=G/H,F\right)$ is a homogeneous Finsler space where $F$ is an invariant $(\alpha_1, \alpha_2)$-metric defined by the invariant Riemannian metric $\alpha =\sqrt{g(\cdot,\cdot)}$. If $(M, g)$ is naturally reductive and ${\mathfrak{g}}'=[\mathfrak{g},\mathfrak{g}]\subseteq \mathfrak{h}+{\mathfrak{m}}_1$, then $(M, F)$ is also naturally reductive.
\end{theorem}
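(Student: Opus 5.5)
We prove Theorem \ref{t4.1} by verifying the condition of Definition \ref{t2.17} directly for $F$, with the same $\Ad(H)$-invariant decomposition $\mathfrak{g}=\mathfrak{m}+\mathfrak{h}$ for which $(M,g)$ is naturally reductive. Since $g$ is Riemannian, its Cartan tensor vanishes, so natural reductivity of $g$ gives
\begin{equation*}
\llaa [z,x]_{\mathfrak{m}},u\rraa+\llaa x,[z,u]_{\mathfrak{m}}\rraa=0,\qquad x,u,z\in\mathfrak{m},
\end{equation*}
that is, $\mathrm{ad}_{\mathfrak{m}}(z)=[z,\cdot]_{\mathfrak{m}}$ is $\llaa\cdot,\cdot\rraa$-skew on $\mathfrak{m}$. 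The hypothesis $[\mathfrak{g},\mathfrak{g}]\subseteq\mathfrak{h}+\mathfrak{m}_1$ implies that $[z,x]_{\mathfrak{m}}\in\mathfrak{m}_1$ for all $x,z\in\mathfrak{m}$. We work with $F=\sqrt{L(\alpha_1^2,\alpha_2^2)}$ and the formula for $g_y$ in terms of $L_i,L_{ij}$ recalled in Section \ref{s2}. We must show that
\begin{equation*}
g_y(u,[z,x]_{\mathfrak{m}})+g_y([z,u]_{\mathfrak{m}},x)+2C_y([z,y]_{\mathfrak{m}},u,x)=0 .
\end{equation*}
We read $[z,y]$ in the Cartan term as its $\mathfrak{m}$-component, as is standard.

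\emph{Step 1: simplify the two $g_y$ terms.} Put $a=[z,x]_{\mathfrak{m}}$ and $c=[z,u]_{\mathfrak{m}}$. Both lie in $\mathfrak{m}_1$, so their $\mathfrak{m}_2$-parts vanish. The formula for $g_y$ then gives
\begin{equation*}
g_y(u,a)=L_{11}\llaa u_1,y_1\rraa\llaa a,y_1\rraa+L_{12}\llaa u_2,y_2\rraa\llaa a,y_1\rraa+L_1\llaa u_1,a\rraa,
\end{equation*}
with the analogous expression for $g_y(c,x)$. By skewness and the fact that $[z,\cdot]_{\mathfrak{m}}$ lands in $\mathfrak{m}_1$, we have $\llaa u_1,a\rraa=\llaa u,[z,x]_{\mathfrak{m}}\rraa$, so the two $L_1$-terms cancel. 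What remains is
\begin{equation*}
\bigl(L_{11}\llaa u_1,y_1\rraa+L_{12}\llaa u_2,y_2\rraa\bigr)\llaa [z,x]_{\mathfrak{m}},y_1\rraa+\bigl(L_{11}\llaa x_1,y_1\rraa+L_{12}\llaa x_2,y_2\rraa\bigr)\llaa [z,u]_{\mathfrak{m}},y_1\rraa .
\end{equation*}
Using skewness again, $\llaa [z,x]_{\mathfrak{m}},y_1\rraa=\llaa[z,x]_{\mathfrak{m}},y\rraa=-\llaa x,[z,y]_{\mathfrak{m}}\rraa$, and similarly for $u$.

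\emph{Step 2: compute the Cartan term.} Write $w=[z,y]_{\mathfrak{m}}\in\mathfrak{m}_1$; then $w_2=0$ and $\llaa w,y_1\rraa=-\llaa [z,y]_{\mathfrak{m}},y\rraa\cdot(-1)=0$ by skewness. Here $2C_y(w,u,x)=\frac{d}{dt}\big|_{t=0}g_{y+tw}(u,x)$. Along $y+tw$, both $|y_1+tw|^2=|y_1|^2+t^2|w|^2$ and $|y_2|^2$ are stationary at $t=0$, so every coefficient $L_i,L_{ij}$ has zero $t$-derivative. Only the explicit inner products with $y_1$ contribute:
\begin{equation*}
2C_y(w,u,x)=L_{11}\bigl(\llaa u_1,w\rraa\llaa x_1,y_1\rraa+\llaa u_1,y_1\rraa\llaa x_1,w\rraa\bigr)+L_{12}\bigl(\llaa u_2,y_2\rraa\llaa x_1,w\rraa+\llaa u_1,w\rraa\llaa x_2,y_2\rraa\bigr).
\end{equation*}
Since $w\in\mathfrak{m}_1$, $\llaa x_1,w\rraa=\llaa x,[z,y]_{\mathfrak{m}}\rraa$, and likewise for $u$. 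This is exactly the negative of the remainder from Step 1, so the sum vanishes.

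\emph{Main obstacle.} The delicate point is the bookkeeping of which component carries the bracket. Step 1 uses $\llaa a,y_1\rraa=\llaa a,y\rraa$, which holds because $a\in\mathfrak{m}_1$. Step 2 uses that the $t$-derivative of the coefficients vanishes, which holds because $w\perp y_1$. With care the terms match pairwise; I would double-check the pairing of the $L_{12}$ cross terms against the symmetric-in-$(u,v)$ form of $g_y$, using the symmetry $L_{12}=L_{21}$.
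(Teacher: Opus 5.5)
Your proof is correct, and it takes a genuinely different, much shorter route than the paper.

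\textbf{The paper's route.} It works with $F=\alpha\phi(\alpha_2/\alpha)$. It first derives the full Cartan tensor of a general $(\alpha_1,\alpha_2)$-metric, a long formula with $|y_2|$ in the denominators. It then substitutes $[z,u]_{\mathfrak{m}}$, $[z,v]_{\mathfrak{m}}$, $[z,y]_{\mathfrak{m}}$, and uses skewness, $\llaa [z,y]_{\mathfrak{m}},y\rraa=0$ and $\llaa [z,y]_{\mathfrak{m}_2},y_2\rraa=0$. Finally it asserts that a lengthy simplification makes everything cancel.

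\textbf{Your route.} You work with $F=\sqrt{L(\alpha_1^2,\alpha_2^2)}$ and use the full strength of the hypothesis: every bracket argument lies in $\mathfrak{m}_1$. So the $\mathfrak{m}_2$-parts drop out of $g_y$ at once. Since $w=[z,y]_{\mathfrak{m}}$ is orthogonal to both $y_1$ and $y_2$, the coefficients $L_i,L_{ij}$ are stationary along $y+tw$. The Cartan term therefore comes from differentiating only the explicit inner products, and no general Cartan formula is needed.

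\textbf{What each buys.} Your argument can be checked line by line and has no $|y_2|$ denominators. It also makes visible that only $[\mathfrak{m},\mathfrak{m}]_{\mathfrak{m}}\subseteq\mathfrak{m}_1$ is used. In fact it is the coordinate form of a short conceptual argument. Put $A=[z,\cdot]_{\mathfrak{m}}$. The identity in Definition \ref{t2.17} is the $t$-derivative at $0$ of $g_{e^{tA}y}(e^{tA}u,e^{tA}x)=g_y(u,x)$, which holds whenever $e^{tA}$ preserves $F|_{\mathfrak{m}}$. A $\llaa\cdot,\cdot\rraa$-skew $A$ with image in $\mathfrak{m}_1$ annihilates $\mathfrak{m}_2$. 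Hence $A$ lies in $\mathfrak{so}(\mathfrak{m}_1)\oplus\mathfrak{so}(\mathfrak{m}_2)$ and preserves $\alpha_1$ and $\alpha_2$. The paper's route, in exchange, yields an explicit general Cartan tensor formula for $F=\alpha\phi(\alpha_2/\alpha)$, at the price of a verification that is hard to follow.

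\textbf{Two small remarks.} First, the $L$-formula for $g_y$ recalled in Section \ref{s2} is missing a factor $2$ on the $L_{ij}$-terms, because $\partial_s|y_1+su_1|^2=2\llaa y_1,u_1\rraa$. Your Step 2 is obtained by differentiating that same expression, so both remainders carry the same constant and the cancellation is unaffected. Second, your line computing $\llaa w,y_1\rraa$ is garbled. It should simply read $\llaa w,y_1\rraa=\llaa [z,y]_{\mathfrak{m}},y\rraa=0$, by skewness with $x=u=y$.
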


\begin{proof}
Assume $(M, g)$ is naturally reductive. We show that:
\[g_y\left({[z,u]}_{\mathfrak{m}},v\right)+g_y\left(u,{[z,v]}_{\mathfrak{m}}\right)+2C_y\left({[z,y]}_{\mathfrak{m}},u,v\right)=0, \qquad       y\neq 0  , z,u,v\in \mathfrak{m}\]
First, by calculating each term of the left side expression above, we observe that

\begin{align*}
g_y\left({[z,u]}_{\mathfrak{m}},v\right)
& =\left\langle {[z,u]}_{\mathfrak{m}},v\right\rangle \phi^2(b)+\phi (b)\phi'(b)
\Bigg(\frac{\left\langle y,v\right\rangle \left\langle y_2,{[z,u]}_{{\mathfrak{m}}_2}\right\rangle }{ |y|   |y_2| }+\frac{\left\langle y_2,v_2\right\rangle \left\langle y,{[z,u]}_{\mathfrak{m}}\right\rangle }{ |y|   |y_2| }
\\
& \quad -\frac{\left\langle y,{[z,u]}_{\mathfrak{m}}\right\rangle \left\langle y,v\right\rangle  |y_2| }{{|y|}^3}+\frac{\left\langle {[z,u]}_{{\mathfrak{m}}_2},v_2\right\rangle  |y| }{|y_2|}-\frac{\left\langle {[z,u]}_{\mathfrak{m}},v\right\rangle  |y_2| }{|y|}
\\
& \quad -\frac{\left\langle {[z,u]}_{{\mathfrak{m}}_2},y_2\right\rangle  \llaa  v_2,y_2 \rraa     |y| }{{|y_2|}^3}\Bigg)
\\
& \quad  +\left(\phi^{'^2}(b)+\phi (b)\phi''(b)\right)\left(\frac{\left\langle {[z,u]}_{{\mathfrak{m}}_2},y_2\right\rangle }{ |y|   |y_2| }-\frac{\left\langle {[z,u]}_{\mathfrak{m}},y\right\rangle  |y_2| }{{|y|}^3}\right) \\
& \qquad \left(\frac{ \llaa  v_2,y_2 \rraa     |y| }{|y_2|}-\frac{ \llaa   v,y \rraa     |y_2| }{|y|}\right),
\end{align*}

\begin{align*}
g_y\left({[z,v]}_{\mathfrak{m}},u\right)
& =-\left\langle {[z,u]}_{\mathfrak{m}},v\right\rangle \phi^2(b)+\phi (b)\phi'(b) \Bigg(\frac{\left\langle y,u\right\rangle \left\langle y,{{[z,v]}_{\mathfrak{m}}}_2\right\rangle }{ |y|   |y_2| }+\frac{\left\langle y_2,u_2\right\rangle \left\langle y,{[z,v]}_{\mathfrak{m}}\right\rangle }{ |y|   |y_2| }
\\
& \quad -\frac{\left\langle y,{[z,v]}_{\mathfrak{m}}\right\rangle \left\langle y,u\right\rangle  |y_2| }{{|y|}^3}+\frac{\left\langle {[z,v]}_{{\mathfrak{m}}_2},u_2\right\rangle  |y| }{|y_2|}-\frac{\left\langle {[z,v]}_{\mathfrak{m}},u\right\rangle  |y_2| }{|y|}
\\
& \quad -\frac{\left\langle {[z,v]}_{{\mathfrak{m}}_2},y_2\right\rangle \left\langle u_2,y_2\right\rangle  |y| }{{|y_2|}^3}\Bigg)
\\
&  \quad +\left(\phi^{'^2}(b)+\phi (b)\phi''(b)\right)
\!\!\mleft(\frac{\left\langle {[z,v]}_{{\mathfrak{m}}_2},y_2\right\rangle }{ |y|   |y_2| }\!-\!\frac{\left\langle {[z,v]}_{\mathfrak{m}},y\right\rangle  |y_2| }{{|y|}^3}\mright)
\!\!\mleft(\frac{\left\langle u_2,y_2\right\rangle  |y| }{|y_2|}-\frac{\left\langle u,y\right\rangle  |y_2| }{|y|}\mright),
\end{align*}
and
\begin{align*}
2C_y\left({[z,y]}_{\mathfrak{m}},u,v\right)
& =2\phi (b)\phi'(b)\left[\left\langle {[z,y]}_{\mathfrak{m}},v\right\rangle \left(\frac{\left\langle u_2,y_2\right\rangle }{ |y|   |y_2| }-\frac{\left\langle u,y\right\rangle  |y_2| }{{|y|}^3}\right)\right.
\\
& \quad + \llaa   v,y \rraa    \left(\frac{\left\langle {[z,y]}_{{\mathfrak{m}}_2},u_2\right\rangle }{ |y|  |y_2| }-\frac{\left\langle {[z,y]}_{\mathfrak{m}},u\right\rangle  |y_2| }{{|y|}^3}\right)
\\
& \quad +\frac{1}{2} \Bigg (\frac{\left\langle {[z,y]}_{\mathfrak{m}},u\right\rangle  \llaa  v_2,y_2 \rraa    }{ |y|  |y_2| }+\frac{\left\langle {[z,y]}_{{\mathfrak{m}}_2},v_2\right\rangle \left\langle u,y\right\rangle }{ |y|  |y_2| }-\frac{\left\langle {[z,y]}_{{\mathfrak{m}}_2},u_2\right\rangle  \llaa   v,y \rraa    }{ |y|  |y_2| }
\\
& \quad -\frac{\left\langle {[z,y]}_{\mathfrak{m}},v\right\rangle }{ |y|  |y_2| }-\frac{\left\langle {[z,y]}_{{\mathfrak{m}}_2},u_2\right\rangle  \llaa  v_2,y_2 \rraa     |y| }{{|y_2|}^3}-\frac{\left\langle {[z,y]}_{{\mathfrak{m}}_2},v_2\right\rangle \left\langle u_2,y_2\right\rangle  |y| }{{|y_2|}^3}
\\
& \quad \left.\left.+\frac{\langle {[z,y]}_{{\mathfrak{m}}_2},u\rangle  \llaa   v,y \rraa     |y_2| }{{|y|}^3}+\frac{\langle {[z,y]}_{\mathfrak{m}},v\rangle \langle u,y\rangle  |y_2| }{{|y|}^3}\right)\right]+2\left(\phi^{'^2}(b)+\phi (b)\phi''(b)\right)
\\
& \qquad \Bigg[\frac{1}{2} \left(\frac{\left\langle {[z,y]}_{{\mathfrak{m}}_2},v_2\right\rangle  |y| }{|y_2|}-\frac{\left\langle {[z,y]}_{\mathfrak{m}},v\right\rangle  |y_2| }{|y|}\right)\left(\frac{\left\langle u_2,y_2\right\rangle }{ |y| |y_2|}-\frac{\left\langle u,y\right\rangle |y_2|}{{|y|}^3}\right)
\\
& \qquad +\frac{1}{2}\left(\frac{\left\langle {[z,y]}_{{\mathfrak{m}}_2},u_2\right\rangle }{ |y|  |y_2| }-\frac{\left\langle {[z,y]}_{\mathfrak{m}},u\right\rangle  |y_2| }{{|y|}^3}\right)\left(\frac{ \llaa  v_2,y_2 \rraa    |y|}{|y_2|}-\frac{ \llaa   v,y \rraa    |y_2|}{|y|}\right)\Bigg].
\end{align*}

We note by assumption of the natural reducibility that $\langle {[z,v]}_{\mathfrak{m}},u\rangle =-\langle {[z,u]}_{\mathfrak{m}},v\rangle $ and $\langle {[z,y]}_{\mathfrak{m}},y\rangle =0$. On the other hand, from the condition ${\mathfrak{g}}'\subseteq \mathfrak{h}+{\mathfrak{m}}_1$ , one can deduce that $\langle {[z,y]}_{{\mathfrak{m}}_2},y_2\rangle =0$. Therefore, with some lengthy calculations and simplification, we have:

\begin{align*}
g_y{}&{}\left({[z,u]}_{\mathfrak{m}},v\right)+g_y\left(u,{[z,v]}_{\mathfrak{m}}\right)+2C_y\left({[z,y]}_{\mathfrak{m}},u,v\right)=
\\
&  \phi (b)\phi'(b)\left[\frac{-\left\langle {[z,y]}_{{\mathfrak{m}}_2},u_2\right\rangle  \llaa   v,y \rraa    }{ |y|  |y_2| }-\frac{\left\langle {[z,y]}_{\mathfrak{m}},u\right\rangle  \llaa  v_2,y_2 \rraa    }{ |y|  |y_2| }+\frac{\left\langle {[z,y]}_{\mathfrak{m}},u\right\rangle \left\langle y,v\right\rangle  |y_2| }{{|y|}^3} \right.
\\
& \qquad \left. +\frac{\left\langle {[z,u]}_{{\mathfrak{m}}_2},v_2\right\rangle  |y| }{|y_2|}-\frac{\left\langle {[z,u]}_{\mathfrak{m}},v\right\rangle  |y_2| }{|y|}  +\frac{\left\langle {[z,y]}_{{\mathfrak{m}}_2},u_2\right\rangle  \llaa  v_2,y_2 \rraa     |y| }{{|y_2|}^3}\right]
\\
& +\phi (b)\phi'(b) \left[\frac{-\left\langle {[z,y]}_{{\mathfrak{m}}_2},v_2\right\rangle \left\langle u,y\right\rangle }{ |y|  |y_2| }-\frac{\left\langle {[z,y]}_{\mathfrak{m}},v\right\rangle \left\langle u_2,y_2\right\rangle }{ |y|  |y_2| }+\frac{\left\langle {[z,y]}_{\mathfrak{m}},v\right\rangle \left\langle u,v\right\rangle  |y_2| }{{|y|}^3}
\right.
\\
& \qquad \left. -\frac{\left\langle {[z,v]}_{{\mathfrak{m}}_2},u_2\right\rangle  |y| }{|y_2|}+\frac{\left\langle {[z,y]}_{{\mathfrak{m}}_2},v_2\right\rangle \left\langle u_2,y_2\right\rangle  |y| }{{|y_2|}^3}\right]
\\
& +\phi (b)\phi'(b)\left[\frac{2\left\langle {[z,y]}_{\mathfrak{m}},v\right\rangle \left\langle u_2,y_2\right\rangle }{ |y|  |y_2| }-\frac{2\left\langle {[z,y]}_{\mathfrak{m}},v\right\rangle \left\langle u,y\right\rangle }{{|y|}^3}+\frac{2\left\langle {[z,y]}_{{\mathfrak{m}}_2},u_2\right\rangle  \llaa   v,y \rraa    }{ |y|  |y_2| } \right.
\\
& \qquad \left.-\frac{2\left\langle {[z,y]}_{\mathfrak{m}},u\right\rangle  \llaa   v,y \rraa     |y_2| }{{|y|}^3}+\frac{\left\langle {[z,y]}_{\mathfrak{m}},u\right\rangle  \llaa  v_2,y_2 \rraa    }{ |y|  |y_2| }+\frac{\left\langle {[z,y]}_{{\mathfrak{m}}_2},v_2\right\rangle \left\langle u,y\right\rangle }{ |y|  |y_2| }\right.
\\
&\qquad  \left.-\frac{\left\langle {[z,y]}_{{\mathfrak{m}}_2},u_2\right\rangle  \llaa   v,y \rraa    }{ |y|  |y_2| }-\frac{\left\langle {[z,y]}_{\mathfrak{m}},v\right\rangle \left\langle u_2,y_2\right\rangle }{ |y|  |y_2| }-\frac{\left\langle {[z,y]}_{{\mathfrak{m}}_2},u_2\right\rangle  \llaa  v_2,y_2 \rraa     |y| }{{|y_2|}^3}
\right.
\\
&\qquad  \left.-\frac{\left\langle {[z,y]}_{{\mathfrak{m}}_2},v_2\right\rangle \left\langle u_2,y_2\right\rangle  |y| }{{|y_2|}^3}+\frac{\left\langle {[z,y]}_{\mathfrak{m}},u\right\rangle  \llaa   v,y \rraa     |y_2| }{{|y|}^3}+\frac{\left\langle {[z,y]}_{\mathfrak{m}},v\right\rangle \left\langle u,y\right\rangle  |y_2| }{{|y|}^3}\right]
\\
& +\left({\phi'}^2(b)+\phi (b)\phi''(b)\right)\left(-\frac{\left\langle {[z,y]}_{{\mathfrak{m}}_2},u_2\right\rangle }{ |y|  |y_2| }+\frac{\left\langle {[z,y]}_{\mathfrak{m}},u\right\rangle  |y_2| }{{|y|}^3}\right)\left(\frac{ \llaa  v_2,y_2 \rraa    }{|y_2|}-\frac{ \llaa   v,y \rraa     |y_2| }{|y|}\right)
\\
& +\left({\phi'}^2(b)+\phi (b)\phi''(b)\right)\left(-\frac{\left\langle {[z,y]}_{{\mathfrak{m}}_2},v_2\right\rangle }{ |y|  |y_2| }+\frac{\left\langle {[z,y]}_{\mathfrak{m}},v\right\rangle  |y_2| }{{|y|}^3}\right)\left(\frac{\left\langle u_2,y_2\right\rangle  |y| }{|y_2|}-\frac{\left\langle u,y\right\rangle  |y_2| }{|y|}\right)
\\
& +\left({\phi'}^2(b)+\phi (b)\phi''(b)\right)\left(\frac{\left\langle {[z,y]}_{{\mathfrak{m}}_2},v_2\right\rangle  |y| }{|y_2|}+\frac{\left\langle {[z,y]}_{\mathfrak{m}},v\right\rangle  |y_2| }{|y|}\right)\left(\frac{\left\langle u_2,y_2\right\rangle }{ |y|  |y_2| }-\frac{\left\langle u,y\right\rangle  |y_2| }{{|y|}^3}\right)
\\
& +\left({\phi'}^2(b)+\phi (b)\phi''(b)\right)\left(\frac{\left\langle {[z,y]}_{{\mathfrak{m}}_2},u_2\right\rangle }{ |y|  |y_2| }-\frac{\left\langle {[z,y]}_{\mathfrak{m}},u\right\rangle  |y_2| }{{|y|}^3}\right)\left(\frac{ \llaa  v_2,y_2 \rraa    }{|y_2|}-\frac{ \llaa   v,y \rraa     |y_2| }{|y|}\right)
\\
& =\phi (b)\phi'(b)\left(\frac{\left\langle {[z,u]}_{{\mathfrak{m}}_2},v_2\right\rangle  |y| }{|y_2|}-\frac{\left\langle {[z,u]}_{\mathfrak{m}},v\right\rangle  |y_2| }{|y|}+\frac{\left\langle {[z,v]}_{{\mathfrak{m}}_2},u_2\right\rangle  |y| }{|y_2|}-\frac{\left\langle {[z,v]}_{\mathfrak{m}},u\right\rangle  |y_2| }{|y|}\right)
\\
& +\left({\phi'}^2(b)+\phi (b)\phi''(b)\right)\!\!\left(\!-\frac{\left\langle {[z,y]}_{{\mathfrak{m}}_2},v_2\right\rangle \left\langle u_2,y_2\right\rangle }{{|y_2|}^2}\!+\!\frac{\left\langle {[z,y]}_{{\mathfrak{m}}_2},v_2\right\rangle \left\langle u,y\right\rangle }{{|y|}^2}+\frac{\left\langle {[z,y]}_{\mathfrak{m}},v\right\rangle \left\langle u_2,y_2\right\rangle }{{|y|}^2} \right.
\\
&\quad \left. -\frac{\left\langle {[z,y]}_{\mathfrak{m}},v\right\rangle \left\langle u,y\right\rangle {|y_2|}^2}{{|y|}^4}\!\right)
\!\!\! \mleft(\frac{\left\langle {[z,y]}_{{\mathfrak{m}}_2},v_2\right\rangle\! \left\langle u_2,y_2\right\rangle }{{|y_2|}^2} \!-\! \frac{\left\langle {[z,y]}_{{\mathfrak{m}}_2},v_2\right\rangle \left\langle u,y\right\rangle }{{|y|}^2}\!-\!\frac{\left\langle {[z,y]}_{\mathfrak{m}},v\right\rangle \left\langle u_2,y_2\right\rangle }{{|y|}^2}\! \mright.
\\
& \qquad \left. +\frac{\left\langle {[z,y]}_{\mathfrak{m}},v\right\rangle \left\langle u,y\right\rangle {|y_2|}^2}{{|y|}^4}\right)\\
& =0.            \qedhere
\end{align*}
\end{proof}

\begin{remark}
\label{t.r1b}
Let $(G,g)$ be naturally reductive and consider $H= \{e\}$ (In other words, $\mathfrak{h}=\left\{0\right\}$). Then, if ${\mathfrak{g}}'\subseteq {\mathfrak{m}}_1$ then ${\mathfrak{m}}_2\subseteq Z(\mathfrak{g})$.
\end{remark}
\begin{proof}
If the above assumptions hold, then $(M, F)$ is naturally reductive. Now, using Lemma 3.2 in \cite{21}, we have $[{\mathfrak{m}}_2,{\mathfrak{m}}_2]\subseteq {\mathfrak{m}}_1$and $[{\mathfrak{m}}_2,{\mathfrak{m}}_2]\subseteq {\mathfrak{m}}_2$. Therefore, $\left[{\mathfrak{m}}_2,{\mathfrak{m}}_2\right]=0$. On the other hand, the second part of the same Lemma shows that $\left[{\mathfrak{m}}_1,{\mathfrak{m}}_2\right]\subseteq \mathfrak{h}=\left\{0\right\}$. Thus ${\mathfrak{m}}_2\subseteq Z(\mathfrak{g})$.
\end{proof}
\begin{remark}
\label{t.r.2b}
As a result, if $g$ is a bi-invariant Riemannian metric on a connected Lie group $G$ such that in the reductive decomposition $\mathfrak{g}=\mathfrak{m}={\mathfrak{m}}_1+{\mathfrak{m}}_2$, we have ${\mathfrak{g}}'\subseteq {\mathfrak{m}}_1$, then $(G,F)$ is naturally reductive.
\end{remark}

We recall that a vector field $X\in \mathfrak{g}\setminus \{0\}$ for a homogeneous Riemannian manifold $(M=G/H,g)$ (resp. a homogeneous Finsler manifold $(M=G/H,F)$) is called a geodesic vector if the curve $\sigma (t)={{\exp \left(tX\right) }}_o$ is a geodesic on $(G/H,g)$ (resp. on $(G/H,F)$). For a homogeneous Riemannian manifold $(G/H,g)$ with a reductive decomposition $\mathfrak{g}=\mathfrak{m}+ \mathfrak{h}$, a vector $X\in \mathfrak{g}$ is a geodesic vector if and only if $g\left(X_{\mathfrak{m}},{\left[X,Y\right]}_{\mathfrak{m}}\right)=0$ for all $Y\in \mathfrak{m}$ (see \cite{16}). It can be shown that, in a similar way for a homogeneous Finsler manifold $(M=G/H,F)$ (see \cite{17}), a vector $X\in \mathfrak{g}$ is a geodesic vector if and only if
\[g_{X_{\mathfrak{m}}}\left(X_{\mathfrak{m}},{\left[X,Z\right]}_{\mathfrak{m}}\right)=0,     \qquad    \forall Z\in \mathfrak{g}.\]

In \cite{19}, the relationship between geodesic vector fields on Riemannian and Finslerian homogeneous spaces in the case of an $(\alpha,\beta )$-metric is demonstrated. The following proposition shows the relationship between geodesic vector fields on these spaces in the case of an $(\alpha_1, \alpha_2)$-metric.
\begin{prop}
\label{t4.2}
Let $(M=G/H, F)$ be a homogeneous Finsler manifold where $F=\alpha \phi (\frac{\alpha _2}{\alpha})$ is an invariant $(\alpha_1, \alpha_2)$-metric defined by an invariant Riemannian metric $\alpha =\sqrt{g(\cdot,\cdot)}$ such that ${\mathfrak{g}}'\subseteq \mathfrak{h}+{\mathfrak{m}}_1$. If $X\in \mathfrak{g}\setminus \{0\}$ is a geodesic vector field for $(G/H,g)$, then $X$ is a geodesic vector field for $(G/H,F)$. Conversely, if $X$ is a geodesic vector field for $(G/H,F)$ such that $\left(\phi^2(b)-\phi (b)\phi'(b)b\right)\neq 0$ and $\phi''(b)\le 0$, where $b=\frac{{\left|X_{{\mathfrak{m}}_2}\right|}_g}{{\left|X_{\mathfrak{m}}\right|}_g}$, then $X$ is a geodesic vector field for $(G/H,g)$.
\end{prop}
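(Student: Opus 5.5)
Write $y=X_{\mathfrak m}=y_1+y_2$ and $w={[X,Z]}_{\mathfrak m}$ for arbitrary $Z\in\mathfrak g$. The plan is to compute $g_y(y,w)$ explicitly from the fundamental tensor formula of Xu and Tan recalled in Section~\ref{s2}, taking $u=y$ and $v=w$. Since $F^2$ is homogeneous of degree two, $g_y(y,w)$ is $\frac12$ of the derivative of $F^2$ at $y$ in direction $w$. Carrying this out (or equivalently plugging $u=y$ into the formula), the $\phi'^2+\phi\phi''$ term vanishes because $\frac{\langle y_2,y_2\rangle}{|y||y_2|}-\frac{\langle y,y\rangle|y_2|}{|y|^3}=0$. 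One should then obtain
\[
g_y(y,w)=\bigl(\phi^2(b)-b\,\phi(b)\phi'(b)\bigr)\langle y,w\rangle+\frac{\phi(b)\phi'(b)}{b}\,\langle y_2,w_2\rangle ,\qquad b=\frac{|y_2|}{|y|}.
\]
I would verify this identity first, since everything else rests on it.

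Next I use the hypothesis ${\mathfrak g}'\subseteq\mathfrak h+\mathfrak m_1$. It gives $[X,Z]\in\mathfrak h+\mathfrak m_1$, so $w={[X,Z]}_{\mathfrak m}\in\mathfrak m_1$ and hence $w_2=0$. Therefore
\[
g_y(y,w)=\bigl(\phi^2(b)-b\phi(b)\phi'(b)\bigr)\,g(X_{\mathfrak m},{[X,Z]}_{\mathfrak m})\qquad\text{for all } Z\in\mathfrak g .
\]

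For the direct statement: if $X$ is geodesic for $g$, then the right-hand side vanishes for all $Z$ (the Riemannian criterion is stated for $Y\in\mathfrak m$, but $\mathfrak h$-components of $Z$ are handled by $\Ad(H)$-invariance, or one can simply use the criterion with $Z\in\mathfrak g$). Hence the Finsler criterion $g_{X_{\mathfrak m}}(X_{\mathfrak m},{[X,Z]}_{\mathfrak m})=0$ holds and $X$ is geodesic for $F$.

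For the converse: if $X$ is geodesic for $F$, the left-hand side vanishes. Dividing by the nonzero factor $\phi^2(b)-b\phi\phi'(b)$ gives the Riemannian criterion.

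The assumption $\phi''\le0$ appears to be needed only to guarantee that the division is legitimate, or to handle degenerate cases. One such case is $y_2=0$, i.e.\ $b=0$, where the formula has $b$ in a denominator; there I argue directly that $F=\alpha\phi(0)$ near $y$ along $\mathfrak m_1$ directions, so $g_y(y,w)=\phi(0)^2\langle y,w\rangle$. The other is the case $y_1=0$. I expect the main obstacle to be handling these boundary cases and the smoothness of $\phi$ at the endpoints, rather than the algebra itself. If $\phi''\le 0$ is really needed, I would use it, via Theorem~\ref{t2.26}, to show that $\phi-b\phi'>0$ has a sign. The nonvanishing hypothesis is then the only real input.
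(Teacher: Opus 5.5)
Your proposal is correct and follows the paper's proof. Both arguments evaluate the fundamental tensor at $u=X_{\mathfrak m}$, note that the ${\phi'}^2+\phi\phi''$ term drops out, and use $\mathfrak g'\subseteq\mathfrak h+\mathfrak m_1$ to reduce $g_{X_{\mathfrak m}}(X_{\mathfrak m},[X,Z]_{\mathfrak m})$ to $(\phi^2(b)-b\phi(b)\phi'(b))\langle X_{\mathfrak m},[X,Z]_{\mathfrak m}\rangle$, from which both directions follow. As you suspected, $\phi''(b)\le 0$ is never used (the paper does not use it either), and your separate treatment of $X_{\mathfrak m_2}=0$ handles a case the paper's formula, with $|X_{\mathfrak m_2}|$ in its denominators, does not address.
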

\begin{proof}
Let $X$ be a geodesic vector field for $(G/H,g)$. Given the assumption ${\mathfrak{g}}'\subseteq \mathfrak{h}+{\mathfrak{m}}_1$ , it can be said $\left\langle {X_{\mathfrak{m}}}_2,{\left[X,Z\right]}_{{\mathfrak{m}}_2}\right\rangle =0$. Therefore, we have
\begin{align*}
g_{X_{\mathfrak{m}}}
{}&{} \left(X_{\mathfrak{m}},{\left[X,Z\right]}_{\mathfrak{m}}\right)=\left\langle X_{\mathfrak{m}},{\left[X,Z\right]}_{\mathfrak{m}}\right\rangle \phi^2(b) \\
& +\phi (b)\phi'(b)\left(\frac{\left\langle X_{\mathfrak{m}},{\left[X,Z\right]}_{\mathfrak{m}}\right\rangle \left\langle X_{{\mathfrak{m}}_2},X_{{\mathfrak{m}}_2}\right\rangle }{\left|X_{\mathfrak{m}}\right|\left|X_{{\mathfrak{m}}_2}\right|}+\frac{\left\langle {X_{\mathfrak{m}}}_2,{\left[X,Z\right]}_{{\mathfrak{m}}_2}\right\rangle \left\langle X_{\mathfrak{m}},X_{\mathfrak{m}}\right\rangle }{\left|X_{\mathfrak{m}}\right|\left|X_{{\mathfrak{m}}_2}\right|} \right.
\\
& \quad  -\frac{\left\langle X_{\mathfrak{m}},X_{\mathfrak{m}}\right\rangle \left\langle X_{\mathfrak{m}},{\left[X,Z\right]}_{\mathfrak{m}}\right\rangle \left|{X_{\mathfrak{m}}}_2\right|}{{\left|X_{\mathfrak{m}}\right|}^3}+\frac{\left\langle {X_{\mathfrak{m}}}_2,{\left[X,Z\right]}_{{\mathfrak{m}}_2}\right\rangle \left|X_{\mathfrak{m}}\right|}{\left|X_{{\mathfrak{m}}_2}\right|}-\frac{\left\langle {X_{\mathfrak{m}}}_2,{\left[X,Z\right]}_{{\mathfrak{m}}_2}\right\rangle \left|X_{{\mathfrak{m}}_2}\right|}{\left|X_{\mathfrak{m}}\right|}
\\
& \quad \left. -\frac{\left\langle X_{{\mathfrak{m}}_2},X_{{\mathfrak{m}}_2}\right\rangle \left\langle {X_{\mathfrak{m}}}_2,{\left[X,Z\right]}_{{\mathfrak{m}}_2}\right\rangle \left|X_{\mathfrak{m}}\right|}{{\left|X_{{\mathfrak{m}}_2}\right|}^3}\right)
\\
& +\left(\phi^{'^2}(b)+\phi \left(b\right)\phi''(b)\right)\left(\frac{\left\langle X_{{\mathfrak{m}}_2},X_{{\mathfrak{m}}_2}\right\rangle }{\left|X_{\mathfrak{m}}\right|\left|X_{{\mathfrak{m}}_2}\right|}-\frac{\left\langle X_{\mathfrak{m}},X_{\mathfrak{m}}\right\rangle \left|X_{{\mathfrak{m}}_2}\right|}{{\left|X_{\mathfrak{m}}\right|}^3}\right)
\\
& \quad \left(\frac{\left\langle {\left[X,Z\right]}_{{\mathfrak{m}}_2},{X_{\mathfrak{m}}}_2\right\rangle }{\left|X_{{\mathfrak{m}}_2}\right|}-\frac{\left\langle {\left[X,Z\right]}_{\mathfrak{m}},{X_{\mathfrak{m}}}_2\right\rangle \left|X_{{\mathfrak{m}}_2}\right|}{\left|X_{\mathfrak{m}}\right|}\right)
\\
& =\left\langle X_{\mathfrak{m}},{\left[X,Z\right]}_{\mathfrak{m}}\right\rangle \phi^2(b)+\phi (b)\phi'(b)\left(\frac{\left\langle {X_{\mathfrak{m}}}_2,{\left[X,Z\right]}_{{\mathfrak{m}}_2}\right\rangle \left|X_{\mathfrak{m}}\right|}{|X_{{\mathfrak{m}}_2}|}-\frac{\left\langle X_{\mathfrak{m}},{\left[X,Z\right]}_{\mathfrak{m}}\right\rangle |X_{{\mathfrak{m}}_2}|}{\left|X_{\mathfrak{m}}\right|}\right),
\intertext{
which results in $g_{X_{\mathfrak{m}}}\left(X_{\mathfrak{m}},{\left[X,Z\right]}_{\mathfrak{m}}\right)=0$. To prove the converse, we note that the last sentence in above can be written as follows:}
&\left\langle X_{\mathfrak{m}},{\left[X,Z\right]}_{\mathfrak{m}}\right\rangle \left(\phi^2(b)-\phi (b)\phi'(b)\frac{|X_{{\mathfrak{m}}_2}|}{\left|X_{\mathfrak{m}}\right|}\right)+\left\langle {X_{\mathfrak{m}}}_2,{\left[X,Z\right]}_{{\mathfrak{m}}_2}\right\rangle \phi (b)\phi'(b)\frac{\left|X_{\mathfrak{m}}\right|}{|X_{{\mathfrak{m}}_2}|}.
\end{align*}
Therefore, given the condition in the assumption and that $\left\langle {X_{\mathfrak{m}}}_2,{\left[X,Z\right]}_{{\mathfrak{m}}_2}\right\rangle =0$, the statement holds.
\end{proof}
\begin{remark}\label{t.r.3}
In the above proposition, we observe that if $\phi''(b)\le 0$, $\phi'(b)\neq 0$, and $\phi (b)>0$, and also considering Theorem~\ref{t2.26}, we have $\phi^2(b)-\phi (b)\phi'(b)\frac{\left|X_{{\mathfrak{m}}_2}\right|}{\left|X_{\mathfrak{m}}\right|}\neq 0$. So, the last condition in the above proposition can be replaced by the conditions $\phi'(b)\neq 0$ and $\phi''(b)\le 0$.
\end{remark}

In the remainder of this section, we analyze the $(\alpha_1,\alpha_2)$-metrics on the tangent bundle of a Lie group under certain conditions. We first present the following remark.

\begin{remark}\label{t5.1}
Let $G$ be a Lie group equipped with a bi-invariant Riemannian metric $g$. Then the Riemannian metric $\tilde{g}$, as defined in Definition \ref{t2.15}, on the tangent bundle $TG$ is bi-invariant if and only if $G$ is commutative.
\end{remark}
\begin{proof}
Let $g\left(\left[X,Y\right],Z\right)+g\left(X,\left[Z,Y\right]\right)=0$ for every $X,Y,Z\in \mathfrak{g}$. To show that the statement holds, it suffices to note that
\begin{align*}
&\tilde{g}\left(\left[X^c,Y^v\right],Z^v\right)+\tilde{g}\left(X^c,\left[Z^v,Y^v\right]\right)=\tilde{g}\left({\left[X,Y\right]}^v,Z^v\right)=g\left(\left[X,Y\right],Z\right),\\
&\tilde{g}\left(\left[X^v,Y^v\right],Z^c\right)+\tilde{g}\left(X^v,\left[Z^c,Y^v\right]\right)=\tilde{g}\left(X^v,{\left[Z,Y\right]}^v\right)=g\left(X,\left[Z,Y\right]\right).
\end{align*}
Furthermore, for the remaining cases, the sum vanishes. Consequently, the equality holds if and only if $\mathfrak{g}$ is an Abelian Lie algebra.
\end{proof}

Here we give a procedure to construct an $(\alpha _1,\alpha _2)$-metric on the tangent bundle $TG$ using a Randers metric on the Lie group $G$. Let $F(x,y)=\alpha (x,y)+\beta (x,y)=\sqrt{g_x(y,y)}+g(X\left(x\right),y)$ be a left-invariant Randers metric on an n-dimensional connected Lie group $G$,  where $x\in M$, $0\neq y\in T_xM$,  $g$ is a left-invariant Riemannian metric and $X$ is a left-invariant vector field  (${\left\|X\right\|}_\alpha <1$). Then, there exists an $\alpha$-orthogonal decomposition $T_eG=\mathfrak{g}={\mathcal{V}}_1\oplus {\mathcal{V}}_2$ $\left({\mathcal{V}}_1{\bot}_g{\mathcal{V}}_2\right)$ in which ${\dim {\mathcal{V}}_1=n-1}$ and ${\dim {\mathcal{V}}_2=1 }$.

Let ${\mathcal{V}}_1=\spann  \left\{Y_1,Y_2,\ldots ,Y_{n-1}\right\}$ and ${\mathcal{V}}_2=\Spann \left\{X\right\}$ so that ${\left\{Y_i\right\}}^{n-1}_{i=1}$ are orthonormal with respect to the Riemannian metric $\alpha $ and also $X$ be $\alpha $-orthogonal to ${\left\{Y_i\right\}}^{n-1}_{i=1}$. With the lift from $G$ to the tangent bundle $TG$ by the vertical and complete vector fields, there exists an $\widetilde\alpha $-orthogonal decomposition $T_{\left(e,y\right)}TG=\widetilde{\mathfrak{g}}=\widetilde{{\mathcal{V}}_1}\oplus \widetilde{{\mathcal{V}}_2}$ $(\widetilde{{\mathcal{V}}_1}{\bot }_{\tilde{g}}\widetilde{{\mathcal{V}}_2})$ such that $\widetilde{{\mathcal{V}}_1}=\Spann \left\{Y^c_1,Y^c_2,\ldots ,Y^c_{n-1},Y^v_1,Y^v_2,\ldots ,Y^v_{n-1}\right\}$ and $\widetilde{{\mathcal{V}}_2}=\Spann \left\{X^c,X^v\right\}$, where $\widetilde\alpha =\sqrt{\tilde{g}(\cdot,\cdot)}$ and $\tilde{g}$ is the left-invariant lifted Riemannian metric on $TG$ according to Definition \ref{t2.15}.

It can be seen that if $\tilde{Z}\in \widetilde{\mathfrak{g}}$, then
\[\exists    \underset{1\le i\le n-1}{{\mu }^c_i, {\mu }^v_i,{\lambda }^c,{\lambda }^v}; \quad    \tilde{Z}=\left(\sum^{n-1}_{i=1}{\left({\mu }^c_iY^c_i+{\mu }^v_iY^v_i\right)}\right)+{\lambda }^cX^c+{\lambda }^vX^v\]
Therefore, the $(\alpha _1,\alpha _2)$-metric on the tangent bundle $TG$ is as follows:
\begin{align*}
\tilde{F}\left((x,y),\tilde{Z}\right)
& =\widetilde\alpha \phi \left(\frac{\widetilde{\alpha _2}}{\widetilde\alpha }\right)={\left\|\tilde{Z}\right\|}_{\widetilde\alpha }\phi \left(\frac{{\left\|{\lambda }^cX^c+{\lambda }^vX^v\right\|}_{\widetilde{\alpha _2}}}{{\left\|\tilde{Z}\right\|}_{\widetilde\alpha }}\right)\\
&  =\widetilde\alpha \mathrm{\psi }\left(\frac{\widetilde{\alpha _1}}{\widetilde\alpha }\right)={\left\|\tilde{Z}\right\|}_{\widetilde\alpha }\mathrm{\psi }\left(\frac{{\left\|\sum^{n-1}_{i=1}{({\mu }^c_iY^c_i+{\mu }^v_iY^v_i)}\right\|}_{\widetilde{\alpha _1}}}{{\left\|\tilde{Z}\right\|}_{\widetilde\alpha }}\right).
\end{align*}
Here we recall the concept of S-curvature. Let $(M, F)$ be a $n$-dimensional Finsler manifold and $\left\{b_i\right\}$ be a basis for $T_xM$.
For a vector $y\in T_xM\setminus \{0\}$, let $\gamma(t)$ be a geodesic with conditions $\gamma(0)=x$ and $\gamma'(0)=y$.
The S-curvature, first introduced by Z. Shen to measure the rate of change of the volume form of a Finsler space along geodesics, is a function on $TM\setminus \{0\}$ defined as follows:
\[S(x,y)\colonequals \frac{d}{dt}\left[\tau \left(\gamma(t),\gamma'(t)\right)\right]_{t=0},\]
where $\tau =\tau(y)$ is called the distortion function associated with $F$, which is independent of the choice of the basis $\{b_i\}$ and also is homogeneous of degree zero with respect to $y$, and is defined as $\tau(y)\colonequals Ln\frac{\sqrt{\det (g_{ij}(y)}}{{\sigma }_F}$.
It should be noted that in the latter formula, ${\sigma}_F$ is defined as ${\sigma}_F\left(x\right)\colonequals \frac{Vol(B^n (1) )}{Vol(B^n_x)}$, where $Vol$ denotes the volume of a subset in Euclidean space, ${\mathbb{R}}^n$, $B^n(1)$ is an open ball of radius $1$, and $B^n_x\colonequals \left\{(y^i\in {\mathbb{R}}^n|F(y^ib_i)<1\right\}$ is a strongly convex open set of ${\mathbb{R}}^n$.

\begin{prop}
\label{t5.4}
Let $G$ be a nilpotent connected Lie group, and $F$ be a Randers metric generated by a left-invariant Riemannian metric $g$ and a left-invariant vector field $X$. If $S_F=0$, then, $S_{\tilde{F}}=0$, where $S_F$ and $S_{\tilde{F}}$ are the $S$-curvature of the Randers metric $F$ on $G$ and the $S$-curvature of the $(\alpha_1, \alpha_2)$-metric $\tilde{F}$ on $TG$ constructed by the Randers metric F, respectively.
\end{prop}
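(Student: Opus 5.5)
The plan is to compute the $S$-curvature of a left-invariant Finsler metric on a Lie group directly through the Euler--Arnold form of the geodesic equation, and then to exploit the algebraic consequences of $S_F=0$ for $\tilde F$. Let $\tilde F$ be left-invariant and $\gamma$ a geodesic with $\gamma(0)=e$, $\gamma'(0)=y$. Write $y(t)=dL_{\gamma(t)^{-1}}\gamma'(t)\in\widetilde{\mathfrak g}$. Left-invariance of $\tilde F$ and of the Busemann--Hausdorff volume gives $\tau(\gamma(t),\gamma'(t))=\tau(e,y(t))$, and therefore
\[
S(e,y)=d\tau_y(\dot y(0))=I_y(\dot y),
\]
where $I_y(v)=g^{ij}(y)C_{ijk}(y)v^k$ is the mean Cartan torsion. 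The velocity $\dot y$ is determined by the Euler--Arnold equation $g_y(\dot y,w)=g_y(y,[y,w])$ for all $w$; this is the same variational identity that underlies the geodesic-vector criterion recalled before Proposition~\ref{t4.2}. I will also use that, by $O(n_1)\times O(n_2)$-invariance and $0$-homogeneity, $\det g_{ij}(y)$ for an $(\alpha_1,\alpha_2)$-metric depends only on $b=|y_2|/|y|$. Hence $S(e,y)=\tau'(b)\,db(\dot y)$, and it suffices to show that $b$ is constant along $y(t)$.

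First I translate $S_F=0$ into algebra. By Deng's characterization of homogeneous Randers spaces with vanishing $S$-curvature, $\beta$ is a Killing form of constant length: $g([Z,X],Y)+g([Z,Y],X)=0$ for all $Y,Z\in\mathfrak g$. Setting $Y=X$ gives $g([Z,X],X)=0$, and the identity itself says that $\mathrm{ad}_X$ is $g$-skew. Since $G$ is nilpotent, $\mathrm{ad}_X$ is nilpotent. A nilpotent skew-symmetric endomorphism vanishes, so $X\in Z(\mathfrak g)$. Feeding this back into the Killing identity yields $g([Z,Y],X)=0$, i.e. $\mathfrak g'\perp_g X$, so $\mathfrak g'\subseteq\mathcal V_1$.

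Next I lift to $TG$ using Theorem~\ref{t2.13}. We get $[Y^c,X^c]=[Y,X]^c=0$, $[Y^c,X^v]=[Y,X]^v=0$ and $[Y^v,X^v]=0$, so $\widetilde{\mathcal V_2}=\spann\{X^c,X^v\}$ is central in $\widetilde{\mathfrak g}$. Moreover $\widetilde{\mathfrak g}'=(\mathfrak g')^c+(\mathfrak g')^v\subseteq\widetilde{\mathcal V_1}$ by Definition~\ref{t2.15}. This puts us in the setting of Theorem~\ref{t4.1} with $\mathfrak h=0$. In particular, for $y\in\widetilde{\mathfrak g}$ and any $w$, the bracket $[y,w]$ lies in $\widetilde{\mathcal V_1}$ and is $\tilde g$-orthogonal to $y_2$.

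The main step, which I expect to be the real obstacle, is to show $\langle\dot y,y\rangle=0$ and $\langle\dot y,y_2\rangle=0$; these together give $\frac{d}{dt}|y|=\frac{d}{dt}|y_2|=0$, hence $db(\dot y)=0$. I would get them by plugging special test vectors $w$ into the Euler--Arnold equation and using the explicit fundamental tensor formula from Section~\ref{s2}.
\begin{itemize}
\item Taking $w\in\widetilde{\mathcal V_2}$ (central), the right-hand side vanishes, so $g_y(\dot y,w)=0$ for all $w\in\widetilde{\mathcal V_2}$.
\item $\tilde F(y(t))$ is constant, so $g_y(y,\dot y)=0$.
\end{itemize}
Expanding $g_y(\dot y,w)$ through the $\phi$-formula, the terms involving $\langle\dot y_2,\cdot\rangle$ and $\langle\dot y,y\rangle$ combine into a linear system in the two scalars $\langle\dot y,y\rangle$ and $\langle\dot y_2,y_2\rangle$. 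Using $w=y_2$ and $w=y$, I expect this system to be nondegenerate by the strong convexity inequalities of Theorem~\ref{t2.26}. Nondegeneracy forces both scalars to vanish, so $b$ is constant, $S_{\tilde F}(e,y)=\tau'(b)\cdot 0=0$, and left-invariance extends this to all of $TG$. If the nondegeneracy check proves messy, my fallback is to argue that the flow of $y(t)$ preserves the $\tilde g$-orthogonal splitting. This would use that $\mathrm{ad}_y$ maps into $\widetilde{\mathcal V_1}$ and kills $\widetilde{\mathcal V_2}$, so that the $\widetilde{\mathcal V_2}$-component of $y(t)$ stays constant, after which constancy of $\tilde F$ gives constancy of $|y_1|$.
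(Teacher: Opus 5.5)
One step in your proposal is false, but the main argument does not depend on it. For a left-invariant $X$ the Killing condition is $g([Z,X],Y)+g([Y,X],Z)=0$, i.e.\ $\mathrm{ad}_X$ is $g$-skew. The identity you wrote, $g([Z,X],Y)+g([Z,Y],X)=0$, is a different and much stronger condition.

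Your conclusion $X\in Z(\mathfrak g)$ still holds, since a skew and nilpotent $\mathrm{ad}_X$ must vanish; this is what the paper takes from Proposition~7.6 of Deng's book. However, $\mathfrak g'\perp_g X$ does not follow and is false. On the Heisenberg algebra $[e_1,e_2]=e_3$, with $e_i$ orthonormal and $X=\epsilon e_3$, we have $S_F=0$ while $X\in\mathfrak g'=\mathbb R e_3$. On $TG$ one even gets $\widetilde{\mathfrak g}'=\spann\{e_3^c,e_3^v\}=\widetilde{\mathcal V_2}$. So the reduction to the setting of Theorem~\ref{t4.1}, the claim $[y,w]\in\widetilde{\mathcal V_1}$, and your fallback are all invalid.

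Your main step never uses them:
\begin{itemize}
\item The relation $g_y(\dot y,w)=0$ for $w\in\widetilde{\mathcal V_2}$ needs only centrality of $\widetilde{\mathcal V_2}$. For that you also need $[Y^v,X^c]=-[X,Y]^v=0$, which you did not list.
\item The relation $g_y(\dot y,y)=0$ is just conservation of $\tilde F$.
\end{itemize}
The nondegeneracy you left as an expectation is immediate. Both $g_y(u,y)$ and $g_y(u,y_2)$ are linear in $(\langle u,y\rangle,\langle u_2,y_2\rangle)$ through a fixed $2\times 2$ matrix. Restrict to $u\in P=\spann\{y_1,y_2\}$ with $y_1,y_2\neq 0$. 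Then $u\mapsto(\langle u,y\rangle,\langle u_2,y_2\rangle)$ is an isomorphism onto $\mathbb R^2$. So is $u\mapsto(g_y(u,y),g_y(u,y_2))$, because $g_y|_P$ is positive definite and $\{y,y_2\}$ is a basis of $P$. Hence the matrix is invertible. Points with $y_1=0$ or $y_2=0$ then follow by continuity of $S_{\tilde F}$.

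With the false claim deleted, your argument is correct and takes a different route from the paper. The paper notes that $X^c,X^v$ are central in $\widetilde{\mathfrak g}$ and then invokes Corollary~4.4 of Deng--Xu to get $S_{\tilde F}=0$. You prove that implication directly:
\begin{itemize}
\item The Euler--Arnold equation makes $\tilde F$ and the $\widetilde{\mathcal V_2}$-momentum conserved.
\item These two conservation laws freeze $b=|y_2|/|y|$ along the reduced geodesic.
\item By $O(n_1)\times O(n_2)$-invariance and $0$-homogeneity, the distortion is a function of $b$ alone.
\end{itemize}
The paper's proof is two lines but rests entirely on external results. Yours is self-contained and shows that centrality of $\widetilde{\mathcal V_2}$ is the only input needed. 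It therefore applies verbatim to any left-invariant $(\alpha_1,\alpha_2)$-metric on a Lie group whose second summand is central.
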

\begin{proof}
According to Proposition 7.6 in Chapter 7 of \cite{8}
\[S_F=0\Leftrightarrow X\in Z\left(\mathfrak{g}\right)\Leftrightarrow X^c, X^v\in Z(\widetilde{\mathfrak{g}}).\]
On the other hand, $\widetilde{{\mathcal{V}}_2}=\spann \left\{X^c,X^v\right\}$. Therefore, according to Corollary 4.4 in  \cite{11}, the statement holds.
\end{proof}

\begin{prop}\label{t5.6}
Let $G$ be an arbitrary connected Lie group and $S_{\tilde{F}}=0$. then, $S_F=0$ and the base Randers metric $F$ is of Berwald type.
\end{prop}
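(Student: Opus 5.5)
The plan is to run the argument of Proposition~\ref{t5.4} backwards. The key input is Corollary~4.4 of \cite{11}, which characterizes the vanishing of the $S$-curvature of a left-invariant $(\alpha_1,\alpha_2)$-metric on a Lie group. I will use it in the following form: for the construction above, $S_{\tilde{F}}=0$ holds if and only if the distinguished subspace $\widetilde{\mathcal{V}_2}$ satisfies a condition which, in the relevant cases (in particular when $\widetilde{\mathcal{V}_2}$ is spanned by vectors that act skew-adjointly, or are central), forces $\mathrm{ad}$ of elements of $\widetilde{\mathcal{V}_2}$ to be $\tilde{g}$-skew. The expected consequence is that $\mathrm{ad}_{X^c}$ and $\mathrm{ad}_{X^v}$ are skew-symmetric with respect to $\tilde g$. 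If Corollary~4.4 in fact yields that $\widetilde{\mathcal{V}_2}$ is central, the argument below only becomes shorter.

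The main step converts skewness of $\mathrm{ad}_{X^v}$ on $\widetilde{\mathfrak g}$ into information on $\mathfrak g$, using Theorem~\ref{t2.13} and Definition~\ref{t2.15}. For $Y,Z\in\mathfrak g$ we have
\[
\tilde g\bigl([X^v,Y^c],Z^v\bigr)=-\tilde g\bigl([Y,X]^v,Z^v\bigr)=g\bigl([X,Y],Z\bigr).
\]
On the other hand,
\[
\tilde g\bigl(Y^c,[X^v,Z^v]\bigr)=0.
\]
Skewness therefore forces $g([X,Y],Z)=0$ for all $Y,Z$, i.e.\ $[X,\mathfrak g]=0$, so $X\in Z(\mathfrak g)$. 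This is the same mixed complete/vertical computation as in Remark~\ref{t5.1}: the vertical lift $X^v$ can only be skew if $X$ is central. The same conclusion follows directly if Corollary~4.4 gives $X^c,X^v\in Z(\widetilde{\mathfrak g})$ instead.

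Having $X\in Z(\mathfrak g)$, Proposition~7.6 of \cite{8} (already used in Proposition~\ref{t5.4}) gives $S_F=0$. For the Berwald claim, I would show that $X$ is $g$-parallel for the Levi-Civita connection of the left-invariant metric. By Koszul's formula,
\[
2g(\nabla_YX,Z)=g([Y,X],Z)-g([X,Z],Y)+g([Z,Y],X).
\]
The first two terms vanish since $X$ is central. The remaining term $g([Z,Y],X)$ requires $X\perp[\mathfrak g,\mathfrak g]$. I expect this to be the main obstacle.

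To get $X\perp\mathfrak g'$, I would extract more from $S_{\tilde F}=0$, namely the skewness of $\mathrm{ad}_{X^c}$ together with the analogous condition on the orthogonal complement. An alternative is to use that a left-invariant $(\alpha_1,\alpha_2)$-metric has vanishing $S$-curvature only when the $\widetilde{\mathcal{V}_2}$-direction is compatible with the bracket, i.e.\ $\tilde g([\tilde U,\tilde W],\widetilde{\mathcal{V}_2})=0$. Applying this with $\tilde U=Y^c$, $\tilde W=Z^c$ gives
\[
\tilde g\bigl([Y,Z]^c,X^c\bigr)=g\bigl([Y,Z],X\bigr)=0,
\]
which is exactly the missing orthogonality. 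Then $\nabla X=0$, and since a Randers metric is Berwald precisely when its defining vector field is parallel \cite{6}, $F$ is of Berwald type. If Corollary~4.4 does not directly provide this orthogonality, I would fall back on the explicit formula for $S$-curvature of homogeneous Finsler metrics, evaluated on $y=X^c+\epsilon Y^c$, to isolate the term $g([Y,Z],X)$.
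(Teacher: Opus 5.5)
Your first half reaches the right intermediate fact, $X\in Z(\mathfrak{g})$, but the appeal to Corollary~4.4 should be made concrete. The criterion behind $(\star)$ and $(\star\star)$ is the following. For non-Riemannian $\tilde F$, $S_{\tilde F}=0$ if and only if $\tilde g([\tilde y_1,\tilde y_2],\tilde y_1)=\tilde g([\tilde y_1,\tilde y_2],\tilde y_2)=0$ for all $\tilde y_i\in\widetilde{\mathcal{V}_i}$. This is not skewness of $\mathrm{ad}$ on $\widetilde{\mathcal{V}_2}$, but here it gives more:
\begin{itemize}
\item Taking $\tilde y_2=X^v$ and $\tilde y_1=a^c+b^v$ with $a,b\perp_g X$ gives $g([X,a],b)=0$.
\item Taking $\tilde y_2=X^c$ and $\tilde y_1=a^c$ gives $g([a,X],X)=0$.
\end{itemize}
Hence $\mathrm{ad}_X=0$, so $X$ is a Killing field and $S_F=0$. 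Conversely, if $X$ is central then $\widetilde{\mathcal{V}_2}$ is central and $S_{\tilde F}=0$. So the hypothesis $S_{\tilde F}=0$ is \emph{equivalent} to $X\in Z(\mathfrak{g})$ and carries no further information.

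The genuine gap is the Berwald step, and it cannot be repaired. The property you invoke, that vanishing $S$-curvature forces $\tilde g([\tilde U,\tilde W],\widetilde{\mathcal{V}_2})=0$, is false. Since $S_{\tilde F}=0$ only encodes centrality of $X$, no finer use of the $S$-curvature formula (including your fallback with $y=X^c+\epsilon Y^c$) can produce $g([Y,Z],X)=0$.

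Here is a counterexample. Let $\mathfrak{g}$ be the Heisenberg algebra with $g$-orthonormal basis $e_1,e_2,e_3$ and $[e_1,e_2]=e_3$, and take $X=ce_3$ with $0<c<1$. Then $X$ is central, so $S_F=0$ and, by Proposition~\ref{t5.4}, $S_{\tilde F}=0$. Yet $\tilde g([e_1^c,e_2^c],X^c)=c\neq 0$. Moreover $2g(\nabla_{e_1}X,e_2)=g([e_2,e_1],X)=-c\neq 0$, so $X$ is not parallel and $F$ is not Berwald. Indeed $d\beta(e_1,e_2)=-c$, so $F$ is not even Douglas.

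The paper's proof fails at the same point. It asserts that $(\star\star)$, i.e.\ $g([Y_i,X],X)=0$, makes $F$ of Douglas type. But that identity already follows from skewness of $\mathrm{ad}_X$ and says nothing about $g(X,[\mathfrak{g},\mathfrak{g}])$, which is what closedness of $\beta$ requires. What is actually provable is $S_F=0$ (indeed $X\in Z(\mathfrak{g})$). $F$ is Berwald exactly when, in addition, $X\perp_g[\mathfrak{g},\mathfrak{g}]$.
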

\begin{proof}
According to Corollary~4.4 in \cite{10},
\[  \begin{cases}
\tilde{g}\left({[Y}^c_i,X^c\right] , Y^c_i)=0  \\
\tilde{g}\left({[Y}^v_i,X^c\right] , Y^v_i)=0
\end{cases}  \Longrightarrow g\left(\left[Y_i,X\right] , Y_i\right)=0.                        \tag{$\star$}\]
Also,
\[\begin{cases}
\tilde{g}\left({[Y}^c_i,X^c\right] , X^c)=0  \\
\tilde{g}\left({[Y}^v_i,X^v\right] , X^v)=0
\end{cases}
\Longrightarrow g\left(\left[Y_i,X\right] , X\right)=0.                       \tag{$\star\star$}\]
Therefore, from $(\star)$ and $(\star\star)$, it follows that $S_F=0$. We also note that $(\star\star)$ implies that $F$ is of Douglas type, and according to Proposition~7.4 in \cite{8}, $F$ is of Berwald type.
\end{proof}



\begin{thebibliography}{99}
\bibitem{1}
 I. Agricola, A.C. Ferreira, Tangent Lie groups are Riemannian naturally reductive spaces, \textit{Adv. Appl. Clifford Algebr.} \textbf{27} (2017), 895-911.

 \bibitem{2}
 H. An, Z. Yan, and Sh. Zhang, S-Curvature and geodesic orbit property of invariant $(\alpha _1,\alpha _2)$ - metrics on spheres, \textit{Bull. Korean Math. Soc.} \textbf{60} (2023), No. 1, pp. 33-46.

 \bibitem{3}
 F. Asgari, H. R. Salimi Moghaddam, On the Riemannian geometry of tangent Lie groups, \textit{Rend. Circ. Mat. Palermo}, II. Ser \textbf{67} (2018), pp. 185-195.

 \bibitem{5}
 D. Bao, S. S. Chern, and Z. Shen, \textit{An introduction to Riemannian--Finsler geometry}, Grad. Texts in Math., vol. 200, Springer, New York, 2000.

 \bibitem{6}
 S. S. Chern, Z.Shen, \textit{Riemann-Finsler Geometry}. Singapore: World Scientific Publishers, 2004.

 \bibitem{8}
 S. Deng, \textit{Homogeneous Finsler Spaces}. New York: Springer, 2012.

 \bibitem{9}
 S. Deng and Z. Hou, Invariant Finsler metrics on homogeneous manifolds, \textit{J. Phys. A: Math. Gen.} \textbf{37} (2004) 8245-8253.

 \bibitem{10}
 S. Deng and Z. Hou, Naturally reductive homogeneous Finsler spaces, \textit{Manuscripta Math.} \textbf{131}(\textbf{1-2}) (2010) 215-229.

 \bibitem{11}
 S. Deng, M. Xu, $(\alpha _1,\alpha _{2})$-metrics and Clifford-Wolf homogeneity. \textit{J. Geom. Anal.} \textbf{26}(\textbf{3}) (2016), 2282--2321.

 \bibitem{12}
 S.Deng, M. Xu, Clifford--Wolf translations of Finsler spaces. \textit{Forum Math.} \textbf{26} (2014), 1413--1428.

 \bibitem{13}
 L. Huang, On the fundamental equations of homogeneous Finsler spaces, \textit{Differential Geom. Appl.} \textbf{40} (2015), 187--208.

 \bibitem{14}
 L. Huang and X. Mo, Homogeneous Einstein metrics on (4n + 3)-dimensional spheres, \textit{Canad. Math. Bull.} \textbf{62} (2018), no. 3, 509--523.

 \bibitem{16}
 Kowalski, O. and Vanhecke, L., Riemannian manifolds with homogeneous geodesics, \textit{Boll. Unione Mat. Ital.} \textbf{5}(1991), 189246.

 \bibitem{17}
 D. Latifi, Homogeneous geodesics in homogeneous Finsler spaces, \textit{J. Geom. Phys.} \textbf{57}(2007) 1421-1433.

 \bibitem{19}
 M. Parhizkar and H. R. Salimi Moghaddam, Geodesic Vector fields of invariant $(\alpha ,\beta )$-metrics on Homogeneous spaces, \textit{Int. Electron. J. Geom. }\textbf{6} (2013), 39--44.

 \bibitem{21}
 J. Tan, M. Xu, Naturally reductive $(\alpha _1,\alpha _{2})$-metrics, \textit{Acta Math. Sci.} \textbf{43} (2023), 1547-1560.

 \bibitem{22}
 M. Xu, S. Deng, Clifford--Wolf homogeneous Randers spaces. \textit{J. Lie Theory} \textbf{23}(2013), 837--845.

 \bibitem{23}
 M. Xu, S. Deng, Left invariant Clifford--Wolf homogeneous ($\alpha$, $\beta$)-metrics on compact semisimple Lie groups. \textit{Transform. Groups} \textbf{20} (2015), 395-416.

 \bibitem{24}
 L. Zhang and M. Xu, Standard homogeneous $(\alpha _1,\alpha _{2})$ -metrics and geodesic orbit property. \textit{Math. Nachr.} \textbf{295}(7) (2022), 1443-1453.

 \bibitem{25}
 S. Zhang, Z. Yan and S. Deng, Naturally reductive homogeneous $(\alpha ,\beta )$ spaces, \textit{Publ. Math. Debrecen} \textbf{102}(3-4) (2023), 415-427.

\end{thebibliography}
\end{document}